\def\Ind{\mathop{\rm Ind}\nolimits}
\def\thetabar{\bar{\theta}}
\def\x{\chi}
\newbox\Gbox
\newdimen\Gwidth
\newdimen\Gheight
\newcommand\Gbar{G\kern-\Gwidth\overline{\phantom{\vrule height\Gheight width\Gwidth}}}
\newbox\Qbox
\newdimen\Qwidth
\newdimen\Qheight
\newcommand\Qbar{Q\kern-\Qwidth\overline{\phantom{\vrule height\Qheight width\Qwidth}}}
\newbox\Bbox
\newdimen\Bwidth
\newdimen\Bheight
\newcommand\Bbar{B\kern-\Bwidth\overline{\phantom{\vrule height\Bheight width\Bwidth}}}
\newbox\Abox
\newdimen\Awidth
\newdimen\Aheight
\newcommand\Abar{A\kern-\Awidth\overline{\phantom{\vrule height\Aheight width\Awidth}}}
\newcommand\bbar{\overline{b}}
\newcommand\Tbar{\overline{T}}
\newcommand\tbar{\overline{t}}
\newcommand\Thetabar{\overline{\Theta}}
\newcommand\iso{\isomorphism}
\newcommand\F{\mathbb{F}}
\newcommand\Z{\mathbb{Z}}
\newcommand\R{\mathbb{R}}
\renewcommand\H{\mathbb{H}}
\newcommand\C{\mathbb{C}}
\let\isomorphism\cong
\renewcommand\cong{\equiv}
\newcommand\sset{\subseteq}
\newcommand\SO{{\rm SO}}
\renewcommand\a{\alpha}
\def\gend#1{\langle#1\rangle}
\numberwithin{equation}{section}
\newtheorem{theorem}{Theorem}[section]
\newtheorem{lemma}[theorem]{Lemma}
\newtheorem{corollary}[theorem]{Corollary}
\theoremstyle{remark}
\newtheorem{example}[theorem]{Example}
\newtheorem{remark}[theorem]{Remark}
\DeclareMathOperator{\Aut}{{\rm Aut}}
\DeclareMathOperator{\Out}{{\rm Out}}
\DeclareMathOperator{\End}{{\rm End}}
\newcommand{\GL}{{\rm GL}}
\newcommand{\PGL}{{\rm PGL}}
\newcommand\SL{\mathop{\rm SL}\nolimits}
\def\mathllap#1{\mathchoice
{\llap{$\displaystyle #1$}}%
{\llap{$\textstyle #1$}}%
{\llap{$\scriptstyle #1$}}%
{\llap{$\scriptscriptstyle #1$}}}
\def\presentation#1#2{\bigl\langle#1\mathllap{\phantom{#2}}\bigm|#2\mathllap{\phantom{#1}}\bigr\rangle}
\begin{document}

\title{Spherical space forms revisited}
\thanks{Supported by NSF grant DMS-1101566}
\author{Daniel Allcock}
\address{Department of Mathematics,\\University of Texas, Austin}
\email{allcock@math.utexas.edu}
\urladdr{http://www.math.utexas.edu/\textasciitilde allcock}
\keywords{spherical space form, Frobenius complement}
\subjclass{20B10
  (57S17, 
  57S25)}
\date{9 September 2016}

\begin{abstract}
We give a simplified proof of J.~A.~Wolf's classification of finite
groups that can act freely and isometrically on a round sphere of some
dimension.  We slightly improve the classification by removing
some non-obvious redundancy.  The groups are the same as the Frobenius complements of
finite group theory.
\end{abstract}

\maketitle

\noindent
In chapters 4--7 of his famous {\it Spaces of Constant Curvature}
\cite{Wolf}, J. A. Wolf classified the spherical space forms: connected Riemannian
manifolds locally isometric to the $n$-sphere $S^n$.
By passing to the action of the fundamental group on the universal
covering space, this is equivalent to classifying the possible free
isometric actions of finite groups on $S^n$.  Then, by embedding $S^n$
in Euclidean space, this is equivalent to classifying the real
representations of finite groups that are ``free'' in the sense
that no element except the identity fixes any vector except~$0$.  This
allowed Wolf to use the theory of finite groups and their
representations.

Our first goal is to give a simplified proof of Wolf's classification
of the finite groups $G$ that can act freely and isometrically on
spheres.  Wolf's main result here was the list of presentations in
theorems~6.1.11 and~6.3.1 of \cite{Wolf}.  Our approach to Wolf's
theorem leads to the most interesting example, the binary icosahedral
group, with very little case analysis and no character theory.  (The
trick consists of the equalities \eqref{eq-the-trick-first-version}
and \eqref{eq-the-trick-second-version} in the proof of lemma~\ref{lem-trick-for-large-Sylow-2-subgroups}.
These rely on an elementary
property of the binary tetrahedral group, stated in
lemma~\ref{lem-free-actions-of-binary-tetrahedral-group}.)

Our second goal is to remove the redundancy from Wolf's list; this
modest improvement appears to be new.  Some groups appear
repeatedly on Wolf's list because different presentations can define
isomorphic groups.  See example~\ref{example-Wolf-duplication} for
some non-obvious isomorphisms.
It would not be  hard to  just
work out the isomorphisms among the groups defined by Wolf's
presentations. But  it is more natural to
reformulate the classification in terms of intrinsically defined
subgroups.  Namely, a finite group $G$ acts freely and isometrically
on a sphere if and only if it has one of 6 possible ``structures'', in
which case it has a unique such ``structure'' up to conjugation.  See
theorems \ref{thm-the-groups} and~\ref{thm-uniqueness-of-structure}.
In fact we parameterize the possible $G$ without redundancy,
in terms of a fairly simple set of invariants.  Namely: a type
\ref{type-I}--\ref{type-VI}, two numbers $|G|$ and $a$, and a subgroup
of the unit group of the ring $\Z/a$.  In a special case one must also
specify a second such subgroup.  See
section~\ref{sec-constructive-classification} for details.

The full classification of spherical space forms requires not just the
list of possible groups, but also their irreducible free actions on
real vector spaces, and how their outer automorphism groups permute these representations. See \cite[Thm~5.1.2]{Wolf} for why
this is the right data to tabulate and \cite[Ch.~7]{Wolf} for the
actual data for each group.  We expect that this data could
be described cleanly in terms of our descriptions of the groups,
but have not worked out the details.  It would remain
lengthy, because of many cases and subcases to consider.

For many authors the phrase ``spherical space form'' means a quotient
of a sphere by a free action of a finite group of homeomorphisms or
diffeomorphisms, rather than isometries.  In this paper we consider
only isometric actions on round spheres.  See \cite{Wall} for the rich topology and group theory involved in
the more general theory.

Expecting topologists and geometers rather than group theorists as
readers, we have made the paper self-contained, with three exceptions.
First, we omit proofs of Burnside's transfer theorem and the
Schur-Zassenhaus theorem. Second, we use the fact that $\SL_2(\F_5)$
is the unique perfect central extension of the alternating group $A_5$
by $\Z/2$, giving a citation when needed.  Third, we use
$\Aut\SL_2(\F_5)=\PGL_2(\F_5)\iso S_5$, which is just an exercise.

Finite group theorists study the same groups Wolf did, from a
different perspective.  We will sketch the connection briefly because
our descriptions of the groups may have some value in this context.  A
finite group $G$ is called a Frobenius complement if it acts
``freely'' on some finite group $H$, meaning that no element of~$G$
except~$1$ fixes any element of~$H$ except~$1$.  To our knowledge, the
structure of Frobenius complements (in terms of presentations) is due
to Zassenhaus \cite{Zassenhaus-old}. Unfortunately his paper contains
an error, and the first correct proof is due to Passman \cite[Theorems
  18.2 and~18.6]{Passman}.  See also Zassenhaus' later paper
\cite{Zassenhaus-new}.  If $G$ is a Frobenius complement, then after a
preliminary reduction one can show that $H$ may be taken to be a
vector space over a finite field $\F_p$, where $p$ is a prime not
dividing~$|G|$. Our arguments apply with few or no changes; see
remark~\ref{remark-Frobenius-complements} and also \cite{Meierfrankenfeld}, especially Prop.\ 2.1.

\medskip
We will continue to abuse language by speaking of free actions on
vector spaces when really we mean that the action is free away
from~$0$.   We will use the standard notation $G'$ for the
commutator subgroup of a group $G$, and $O(G)$ for the unique
maximal-under-inclusion odd normal subgroup when $G$ is finite.

We will also use ATLAS notation for group structures \cite{ATLAS}.
That is, if a group $G$ has a normal subgroup $A$, the quotient by
which is $B$, then we may say ``$G$ has structure $A.B$''.  We
sometimes write
this as $G\sim A.B$. This usually does not completely describe $G$, because
several nonisomorphic groups may have ``structure $A.B$''.
Nevertheless it is a helpful shorthand, especially if $A$ is
characteristic.  If the group extension splits then we may write $A:B$
instead, and if it doesn't then we may write $A\cdot B$.  See
theorem~\ref{thm-the-groups} for a some examples.  When we write $A:B$, we will
regard $B$ as a subgroup of $G$ rather than just a quotient.  (In all
our uses of this notation, the complements to $A$ turn out to be
conjugate, so there is no real ambiguity in choosing one of them.)

I am very grateful to the referee for catching a serious error in an
earlier version of this paper.

\section{The groups}
\label{sec-groups}

\noindent
It is well-known that the groups of orientation-preserving isometries
of the tetrahedron, octahedron (or cube) and icosahedron (or
dodecahedron) are subgroups of $\SO(3)$ isomorphic to $A_4$, $S_4$ and
$A_5$.  The preimages of these groups in the double cover of $\SO(3)$
are called the binary tetrahedral, binary octahedral and binary
icosahedral groups.  They have structures $2.A_4$, $2.S_4$ and
$2.A_5$, where we are using another ATLAS convention: indicating a
cyclic group of order $n$ by simply writing $n$; here $n$ is~$2$.
These are the only groups with these structures that we will encounter
in this paper.  So we abbreviate them (again following the ATLAS) to
$2A_4$, $2S_4$ and $2A_5$, and specify that this notation refers to
the binary polyhedral groups, rather than some other groups with
structure $2.A_4$, $2.S_4$ or $2.A_5$.  Alternate
descriptions of the binary tetrahedral and binary icosahedral groups
are $2A_4\iso\SL_2(3)$ and $2A_5\iso\SL_2(5)$.

It is also well-known that the double cover of $\SO(3)$ may be
identified with the unit sphere $\H^*$ in Hamilton's quaternions $\H$.
A finite subgroup of $\H^*$ obviously acts by left multiplication on
$\H^*$.  So $2A_4$, $2S_4$ and $2A_5$ act freely on the unit sphere~$S^3$.

Similarly, $\SO(3)$ contains dihedral subgroups, and their preimages
in $\H^*$ are called binary dihedral.  If we start with the dihedral
group of order $2n$, then the corresponding binary dihedral group of order~$4n$ can
be presented by
$$
\presentation{x,y}{x^{2n}=1,\ y x y^{-1}=x^{-1},\ y^2=x^n}.
$$ 
This group may be identified
with a subgroup of $\H^*$ by taking 
\begin{equation}
\label{eq-binary-dihedral-representation}
x\mapsto\hbox{(any primitive $2n$th root
of unity in $\R\oplus\R i$)}\qquad y\mapsto j.
\end{equation}
Left multiplication by these elements of $\H^*$ gives a free action on~$S^3$.
(Replacing the root of unity by its
inverse gives an equivalent representation.)
Restricting to the case $n=2^{m-2}$, $m\geq3$, one obtains the
quaternion group $Q_{2^m}$ of order $2^m$.  Some authors call this a
generalized quaternion group, with ``quaternion group'' reserved for
$Q_8$.
  
Now we can state our version of Wolf's theorems 6.1.11 and~6.3.1,
supplemented by a uniqueness theorem. An $n$-element of a
group means an element of order~$n$.  

\begin{theorem}[Groups that act freely and isometrically on spheres]
\label{thm-the-groups}
Suppose $G$ is a finite group that acts freely and isometrically on a
sphere of some dimension.  Then it has one of the following six
structures, where $A$ and $B$ are cyclic groups whose orders are odd
and coprime, every nontrivial Sylow subgroup of $B$ acts nontrivially on $A$, and
every prime-order element of $B$   acts trivially on $A$.
\renewcommand\theenumi{{\rm\Roman{enumi}}}
\begin{enumerate}
\item
\label{type-I}
$A:\bigl(B\times\hbox{\rm (a cyclic $2$-group $T$)}\bigr)$, where
if $T\neq1$ then its involution fixes
$A$ pointwise.
\item
\label{type-II}
$A:\bigl(B\times\hbox{\rm (a quaternionic group $T$)}\bigr)$.
\item
\label{type-III}
$(Q_8\times A):(\Theta\times B)$, where $\Theta$ is a cyclic $3$-group
which acts nontrivially on $Q_8$ and whose $3$-elements centralize
$A$, and $|A|$ and $|B|$ are prime to~$3$.
\item
\label{type-IV}
$\bigl( (Q_8\times A):(\Theta\times B)\bigr)\cdot2$, where $\Theta$,
$|A|$ and $|B|$ are as in \eqref{type-III}, and the
quotient $\Z/2$ is the image of a subgroup $\Phi$ of $G$, isomorphic to
$\Z/4$, whose $4$-elements act by an outer automorphism on $Q_8$, by
inversion on $\Theta$ and trivially on~$B$.
\item
\label{type-V}
$2A_5\times(A:B)$ where $|A|$ and $|B|$ are prime to~$15$.
\item
\label{type-VI}
$\bigl(2A_5\times(A:B)\bigr)\cdot2$, where $|A|$ and $|B|$ are prime to~$15$, 
and the
quotient $\Z/2$ is the image of a subgroup $\Phi$ of $G$, isomorphic to
$\Z/4$, whose $4$-elements act by an outer automorphism on $2A_5$ and
trivially on $B$. 
\end{enumerate}
Conversely, any group with one of these structures acts freely and
isometrically on
a sphere of some dimension.
\end{theorem}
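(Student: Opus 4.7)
My plan for the forward direction is to combine classical Sylow-theoretic restrictions with Burnside's transfer theorem and Schur-Zassenhaus to reduce to the analysis of a small non-abelian ``core''. First I note that since every subgroup of $G$ acts freely, no subgroup of $G$ of the form $\Z/p\times\Z/p$ can exist: restricting a faithful complex representation of $G$ to such a subgroup and decomposing into one-dimensional characters would exhibit a nontrivial element with fixed vectors, because every nontrivial character of $\Z/p\times\Z/p$ has nontrivial kernel. Classical $p$-group theory then forces every odd Sylow subgroup of $G$ to be cyclic and the Sylow $2$-subgroup to be cyclic or generalized quaternion. Let $O=O(G)$. All Sylow subgroups of $O$ are cyclic, so $O$ is metacyclic $O=A:B$ with $A$, $B$ cyclic of coprime orders (a classical result of Burnside and Zassenhaus). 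Since $|O|$ is coprime to $|G/O|$, Schur-Zassenhaus supplies a complement $K$ to $O$ in $G$, giving $G=(A:B):K$.

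The heart of the argument is identifying $K$. Using that $O$ is maximal, one checks that $K$ has no nontrivial odd normal subgroup (any such subgroup $N$ would give an odd normal subgroup $ON\supsetneq O$ of $G$). Its Sylow $2$-subgroup $T$ is cyclic or $Q_{2^m}$. If $T$ is cyclic then Burnside's transfer theorem gives $K$ a normal $2$-complement, necessarily trivial, so $K=T$, yielding type~\ref{type-I}. If $T\iso Q_{2^m}$ with $m\ge 4$, lemma~\ref{lem-trick-for-large-Sylow-2-subgroups} forces $K=T$, yielding type~\ref{type-II} in this range. If $T\iso Q_8$, the conjugation action of $K$ on $Q_8$ gives a homomorphism $K\to\Aut(Q_8)\iso S_4$; combined with the absence of odd normal subgroups in $K$, this narrows $K$ to the list $\{Q_8,\,2A_4,\,2S_4,\,2A_5,\,2A_5\cdot 2\}$, matching types~\ref{type-II}--\ref{type-VI}. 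The icosahedral cases are produced by applying lemma~\ref{lem-trick-for-large-Sylow-2-subgroups} when a $5$-element is present, and identified using the uniqueness of $2A_5$ as the perfect central extension of $A_5$ by $\Z/2$. Once $K$ is known, the remaining conditions on $\Theta$, $\Phi$, $T$ and the action of $B$ on $A$ are either intrinsic to the chosen decomposition (absorb into $A$ any Sylow part of $B$ acting trivially on $A$) or immediate translations of the freeness hypothesis.

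For the converse, I would exhibit a free real representation for each of the six structures. The binary polyhedral and binary dihedral groups $Q_{2^m}$, $2A_4$, $2S_4$, $2A_5$ already act freely on $\R^4$ by left multiplication in~$\H$, as in~\eqref{eq-binary-dihedral-representation}. For the metacyclic piece $A:B$, a free complex representation is obtained by inducing a faithful character from a suitable normal cyclic subgroup containing $A$. In direct-product cases one tensors: since the factors have coprime orders, the tensor product of their free representations remains free (a nontrivial $n$th root of unity times a nontrivial $m$th root of unity is nontrivial when $\gcd(n,m)=1$). For the nonsplit $\cdot 2$ extensions in types~\ref{type-IV} and~\ref{type-VI}, one induces a carefully chosen complex representation from the index-$2$ normal subgroup.

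The main obstacle is the $T=Q_8$ case: showing the list of possible $K$ is exactly $\{Q_8,\,2A_4,\,2S_4,\,2A_5,\,2A_5\cdot 2\}$, and in particular that no group strictly larger than $2A_5\cdot 2$ appears on the icosahedral side. Here lemma~\ref{lem-trick-for-large-Sylow-2-subgroups} is decisive: it produces a $2A_5$ subgroup from a $5$-element and bypasses the character-theoretic inequalities of \cite{Wolf}. A secondary obstacle is the converse for types~\ref{type-IV} and~\ref{type-VI}, where the nonsplit extension must be realized concretely through the right choice of character on the index-$2$ subgroup before inducing.
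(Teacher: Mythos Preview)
Your overall shape is reasonable, but the key structural step---taking a Schur--Zassenhaus complement $K$ to $O(G)$ in $G$---fails. The orders $|O(G)|$ and $|G/O(G)|$ need not be coprime: in a type~\ref{type-III} group with $|\Theta|>3$, the index-$3$ subgroup of $\Theta$ lies in $O(G)$, so $3$ divides $|O(G)|$, while $G/O(G)\iso 2A_4$ also has order divisible by~$3$. Worse, no complement exists at all in that case: every element of order~$3$ in $G$ lies in a Sylow $3$-subgroup conjugate to $\Theta$, hence is a conjugate of the generator of the index-$3$ subgroup of $\Theta$, hence lies in $O(G)$. So there is no subgroup of $G$ isomorphic to $2A_4$ meeting $O(G)$ trivially, and your decomposition $G=(A:B):K$ simply does not exist. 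This is not a technicality you can patch locally; the whole ``identify $K$'' strategy collapses for exactly the groups where $\Theta$ is larger than $\Z/3$.

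There is a second gap in your invocation of lemma~\ref{lem-trick-for-large-Sylow-2-subgroups}. That lemma is proved in the paper under the standing hypotheses of section~\ref{sec-perfect}: $G$ is \emph{perfect}, acts freely on a sphere, and every proper subgroup already has one of the six structures. You apply it to your complement $K$, which is neither assumed perfect nor known to satisfy the inductive hypothesis; and even when it does apply, its conclusion is a contradiction ($G$ infinite), not ``$K=T$''. Your sentence ``the icosahedral cases are produced by applying lemma~\ref{lem-trick-for-large-Sylow-2-subgroups} when a $5$-element is present'' is also off: that lemma concerns $Q_{16}$, not $5$-elements; the $2A_5$ recognition in the paper comes instead from lemma~\ref{lem-G-is-2A5}, after lemma~\ref{lem-trick-for-large-Sylow-2-subgroups} has ruled out $Q_{16}$.

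The paper avoids all of this by working inductively on $|G|$ and splitting into perfect versus imperfect, rather than trying to split off $O(G)$ globally. In the perfect case it shows $G\in\{1,2A_5\}$ directly (sections~\ref{sec-perfect}); in the imperfect case it takes a prime-index normal subgroup $M$, uses the inductive structure of $M$, and funnels $G$ into one of three lemmas (normal $2A_5$, normal $Q_8$ inside $G'$, or $G/O(G)$ a $2$-group). For the converse, the paper's argument is also different from and cleaner than yours: it passes to the subgroup $H$ generated by prime-order elements, observes that the representation induced from a free $H$-action is free on $G$, and then checks that $H$ is always one of a short list (cyclic, $2A_4\times\text{cyclic}$, or $2A_5\times\text{cyclic}$) for which a free action is immediate via $\H$.
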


These groups are parameterized in terms of simple invariants in
theorem~\ref{thm-irredundant-list}.  A binary dihedral group has type
\ref{type-I} or~\ref{type-II}, $2A_4$ has type~\ref{type-III}, $2S_4$
has type~\ref{type-IV}, and $2A_5$ has type~\ref{type-V}.

\begin{theorem}[Uniqueness of structure]
\label{thm-uniqueness-of-structure}
The structure in theorem~\ref{thm-the-groups} is unique, in the following sense:
\begin{enumerate}
\item
\label{item-cases-disjoint}
Groups of different types \ref{type-I}--\ref{type-VI} cannot be isomorphic.
\item
\label{item-uniqueness-of-structure}
Suppose $G$ has  one of the types
\ref{type-I}--\ref{type-VI}, with respect to
some subgroups $A$, $B$ (and whichever of $T$, $Q_8$, $\Theta$, $\Phi$ and $2A_5$ are
relevant), and also with respect to some subgroups $A^*$, $B^*$ (and
$T^*$, $Q_8^*$, $\Theta^*$, $\Phi^*$ and $2A_5^*$, when relevant).  Then 
some element of $G$ conjugates every unstarred group to
the corresponding starred group.
In particular, $A^*=A$ (and $Q_8^*=Q_8$ and $2A_5^*=2A_5$, when
relevant).
\end{enumerate}
\end{theorem}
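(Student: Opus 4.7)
The plan is to handle parts (i) and (ii) separately.

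For part (i), I would distinguish the six types by easily checked isomorphism invariants of $G$:\ solvability, the isomorphism class of a Sylow $2$-subgroup $G_2$, and the isomorphism class of the largest normal $2$-subgroup $O_2(G)$. Types V and VI contain $2A_5$, hence are nonsolvable, separating them from solvable types I--IV. Within the solvable block, $G_2$ is cyclic in I, generalized quaternion $Q_{2^m}$ ($m\geq 3$) in II, exactly $Q_8$ in III, and $Q_{16}$ in IV; in the cases where the Sylow type coincides between II and III or between II and IV, one further uses $O_2(G)$ (which equals the full $Q_8$ in III, IV but is a proper subgroup in II) together with the presence of a normal $3$-subgroup acting nontrivially on $O_2(G)$. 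Among nonsolvable types, $G_2\iso Q_8$ in V while $G_2\iso Q_{16}$ in VI.

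For part (ii), fix the type (via (i)) and identify each named subgroup canonically. Let $O(G)$ denote the characteristic maximal normal odd subgroup and $F = F(O(G))$ its Fitting subgroup. I would show that $F = A$ in types I, II, V, VI, while $F = A \times \Theta$ in types III, IV, with $A$ the Hall $\{3\}'$-subgroup and $\Theta$ the Sylow $3$-subgroup of $F$. Further, $Q_8 = O_2(G)$ in types III, IV; and $2A_5$ is the unique maximal perfect subgroup of $G$ in types V, VI. These identifications make $A$, $\Theta$, $Q_8$, $2A_5$ (where relevant) characteristic subgroups of $G$, so the claimed equalities $A^*=A$, $\Theta^*=\Theta$, $Q_8^*=Q_8$, $2A_5^*=2A_5$ follow immediately.

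The remaining subgroups $B$, $T$, $\Phi$ are unique only up to conjugation. Here $B$ is a complement to $A$ in $O(G)$ (modulo $\Theta$ in types III, IV), so Schur--Zassenhaus gives a conjugating element; $T$ in types I and II is a Sylow $2$-subgroup of $G$, so Sylow's theorem applies. The main obstacle is $\Phi$ in types IV and VI, since $\Phi$ is neither characteristic nor of order coprime to the rest. I would handle this as follows:\ any such $\Phi$ lies in some Sylow $2$-subgroup $G_2\iso Q_{16}$ as a cyclic subgroup of order $4$ whose order-$2$ subgroup coincides with $Z(Q_8)$ and which induces the outer automorphism on $Q_8$ (respectively on the $Q_8$ inside $2A_5$). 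A direct check in $Q_{16}$ shows that all such $\Z/4$ subgroups form a single conjugacy class under inner automorphisms of $G_2$; combining this with a Sylow conjugation taking $G_2^*$ to $G_2$ produces a single element $g\in G$ conjugating every starred subgroup simultaneously to its unstarred counterpart. Simultaneity is automatic once the characteristic subgroups $A$, $\Theta$, $Q_8$, $2A_5$ are matched, since the remaining work happens entirely inside a single Sylow $2$-subgroup.
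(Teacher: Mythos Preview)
Your approach to part~(i) via Sylow invariants is workable in spirit but looser than you state. In type~\ref{type-II} with $T\iso Q_8$ acting trivially on $A$ (which is allowed), one has $O_2(G)=Q_8$, so your claim that $O_2(G)$ is a \emph{proper} subgroup in type~\ref{type-II} is false. Also, in type~\ref{type-III} the Sylow $3$-subgroup $\Theta$ is not normal in $G$ (it is moved by $Q_8$), so there is no ``normal $3$-subgroup acting nontrivially on $O_2(G)$''; the correct invariant is whether $3$ divides $|G/C_G(O_2(G))|$, or more simply $G/O(G)$. The paper's argument computes $G/O(G)$ directly and finds it to be a cyclic $2$-group, quaternionic, $2A_4$, $2A_4\cdot2$, $2A_5$, or $2A_5\cdot2$ in the six cases respectively---a single invariant that separates all types at once.

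Part~(ii) has a genuine gap: your identification $A=F(O(G))$ is wrong whenever $B\neq1$. By hypothesis every prime-order element of $B$ centralizes $A$, so $C_B(A)\neq1$; since $C_B(A)$ is normal in $O(G)$ and commutes with $A$, one has $F(O(G))=A\times C_B(A)\supsetneq A$. Thus the Fitting subgroup does not recover~$A$. The paper instead characterizes $A$ via the metacyclic decomposition (lemma~\ref{lem-metacyclic-decomposition}): $A$ is the unique normal subgroup of the relevant odd group with $A$ and the quotient cyclic of coprime orders and every Sylow of the quotient acting nontrivially on~$A$. Your treatment of $\Theta$ has the same problem compounded: $\Theta$ is not contained in $O(G)$ at all (only its index-$3$ subgroup is), so it cannot be any Sylow piece of $F(O(G))$; and $\Theta$ is not characteristic in $G$, merely a Sylow $3$-subgroup.

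Finally, the simultaneity claim at the end is too quick. Knowing that $B$ and $B^*$ are conjugate, and separately that $T$ and $T^*$ are conjugate (Sylow), does not give one element doing both; the Sylow conjugation need not fix~$B$. The paper handles this by constructing the subgroups in order, at each stage working inside the normalizer of the previously chosen ones: $B$ is unique up to $A$-conjugacy, then $T$ (or $\Theta$, or $\Phi$) is found inside $N_G(B)$ and shown to be unique up to conjugacy \emph{there}, using the Frattini argument and the ``persistence'' clause \eqref{item-decomposition-persists} of lemma~\ref{lem-metacyclic-decomposition}. This layered construction is what delivers a single conjugating element, and it is exactly what your outline is missing.
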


\begin{remark}[Correspondence with Wolf's types]
\label{remark-correspondence-with-Wolf}
Our types correspond in the obvious way to Wolf's in
theorems 6.1.11 and~6.3.1 of \cite{Wolf}.  However, his generator $A$
might not generate our subgroup~$A$ and his generator $B$ might not
even lie in our
subgroup $B$.
\end{remark}

\begin{remark}[Implied relations]
\label{remark-implied-relations}
Some useful information is implicit.  For example, $B$
acts trivially on $Q_8$ for types \ref{type-III} and
\ref{type-IV}, because $Q_8$ has no automorphisms
of odd order${}>3$.  Also, $\Theta$ must act trivially on $A$ for type~\ref{type-IV}, because $A$ has abelian automorphism
group and $\Theta\leq G'$.  In light of
these remarks, one could rewrite $G$'s structure for type~\ref{type-IV} as
$\bigl((Q_8:\Theta)\times(A:B)\bigr)\cdot2$.  This would be more
informative, but hide the relationship to type~\ref{type-III}.

In all cases, $G$ has at most one involution.  This is obvious
except for type~\ref{type-II}.  Then,
$T$'s central involution lies in $T'$, which centralizes~$A$ because $\Aut
A$ is abelian.  
\end{remark}

At this point we will prove the easy parts of the theorems, namely
that the listed groups do act freely on spheres, and that groups of
different types cannot be isomorphic.  The proof that every group
acting freely on a sphere has structure as in theorem~\ref{thm-the-groups} appears in
sections~\ref{sec-perfect}--\ref{sec-imperfect}, and theorem~\ref{thm-uniqueness-of-structure}'s
uniqueness statement is a
byproduct of the proof.

\begin{proof}[Proof of ``Conversely$\ldots$'' in theorem~\ref{thm-the-groups}]
Define $H$ as the normal subgroup of $G$ that is generated by the
elements of prime order.  We claim: if $H$ has a free action on a real
vector space $V$, then $G$ acts freely on its representation $W$
induced from $V$.  To see this, recall that as a vector space, $W$ is
a direct sum of copies of $V$, indexed by $G/H$.  And $H$'s actions on
these copies of $V$ are isomorphic to the representations got by
precomposing $H\to\GL(V)$ by automorphisms $H\to H$ arising from
conjugation in $G$.  In particular, $H$ acts freely on $W$.  We claim
that $G$ also acts freely on $W$.  Otherwise, some prime-order element
of $G$ would have a fixed point.  But it would also  lie in $H$, which
acts freely.

Now it suffices to determine $H$ and show that it always has a free
action.  For types~\ref{type-I}--\ref{type-II}, $H$ is a cyclic group,
and for types~\ref{type-V}--\ref{type-VI} it is
$2A_5\times\hbox{(cyclic group of order prime to 30)}$.  For types
\ref{type-III}--\ref{type-IV}, $H$ is either cyclic or
$2A_4\times\hbox{(cyclic group of order prime to 6)}$, according to
whether $|\Theta|>3$ or $|\Theta|=3$.  These claims are all easy,
using the facts that $G$'s involution is central (if one exists) and
the prime-order elements of $B$ and (if relevant) $\Theta$ act
trivially on $A$.

Cyclic groups obviously admit free actions.  For a product of $2A_4$
or $2A_5$ by a cyclic group of coprime order, we identify each factor with a
subgroup of $\H^*$ and make $2A_4$ or $2A_5$ act on $\H$ by left multiplication
and the cyclic group act by right multiplication.  If there were a
nonidentity element of this group with a nonzero fixed vector, then
there would be one having prime order, hence lying in one of the
factors.  But this is impossible since each factor acts freely.
\end{proof}

\begin{proof}[Proof of theorem~\ref{thm-uniqueness-of-structure}(\ref{item-cases-disjoint})]
Suppose $G$ has one of the types
\ref{type-I}--\ref{type-VI}.  Then the
subgroup $H$ generated by $A$, $B$ and the index~$3$ subgroup of
$\Theta$ (for types
\ref{type-III}--\ref{type-IV}) is
normal in $G$ and has odd order.  The quotient $G/H$ is a cyclic
$2$-group, a quaternionic group, $2A_4$, $2A_4\cdot2$, $2A_5$ or
$2A_5\cdot2$ respectively.  None of these has an odd normal subgroup
larger than $\{1\}$.  Therefore $H$ is all of $O(G)$, and the isomorphism class of
$G/O(G)$ distinguishes the types.
\end{proof}

\section{Preparation}
\label{sec-preparation}

\noindent
In this section we suppose $G$ is a finite group.  We will establish
general properties of $G$ under hypotheses related to $G$ having a
free action on a sphere.  The results before
lemma~\ref{lem-metacyclic-decomposition} are standard, and included
for completeness.  Lemma~\ref{lem-metacyclic-decomposition} is a
refinement of a standard result.

\begin{lemma}[Unique involution]
\label{lem-unique-involution}
Suppose $G$ has a free action on a sphere.  Then it has at most one
involution.
\end{lemma}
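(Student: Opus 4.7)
The plan is to translate the geometric hypothesis into the linear-algebraic statement recalled in the introduction: a free isometric action on $S^n$ extends to a free orthogonal representation on $\R^{n+1}$, where ``free'' means no nonidentity element fixes a nonzero vector. So it suffices to show that in any free real representation of $G$, there is at most one involution.

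Let $\iota\in G$ be an involution and consider its action on the representation space $V=\R^{n+1}$. Since $\iota$ is orthogonal of order~$2$, it is diagonalizable with eigenvalues in $\{+1,-1\}$. The $+1$-eigenspace is exactly the fixed-point set of $\iota$ on~$V$; by freeness this is $\{0\}$. Hence every eigenvalue is $-1$, i.e., $\iota$ acts as $-I$ on $V$.

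Since $-I$ is a single element of $\GL(V)$, any two involutions of $G$ act identically on $V$. Their product then acts trivially on $V$, and by freeness (applied to this product, which has order $1$ or $2$) it must be the identity. Thus the two involutions coincide, proving uniqueness.

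The argument is entirely routine; there is no real obstacle. The only thing to be careful about is the translation step between the action on $S^n$ and the linear action on $\R^{n+1}$, which has already been spelled out in the introduction of the paper, so it can simply be invoked.
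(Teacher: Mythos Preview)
Your proof is correct and follows essentially the same approach as the paper: pass to the free real representation, observe that an involution has only eigenvalues $\pm1$, rule out $+1$ by freeness, and conclude that every involution acts as $-I$, hence is unique. The paper's version is just slightly terser.
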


\begin{proof}
Choose a free (hence faithful) action of $G$ on a real vector space~$V$.  An involution has eigenvalues $\pm1$, but $+1$ cannot appear
by freeness.  So there can be only one involution, acting by
negation. 
\end{proof}

\begin{lemma}
\label{lem-abelian-subgroups-are-cyclic}
Suppose $G$ has a free action on a sphere. Then
every abelian subgroup is cyclic, and so is every subgroup of
order $p q$, where $p$ and $q$ are primes.
\end{lemma}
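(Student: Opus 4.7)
My plan is to pull back, from any free real representation of $G$, constraints on which complex irreducible representations of a given subgroup can occur, and then apply these constraints to deduce cyclicity. The key observation is: if $V$ is a real representation of $G$ and some $h\in G$ has $1$ as an eigenvalue on an irreducible summand of $V\otimes_\R\C$ under some subgroup $H\ni h$, then $h$ fixes a nonzero real vector of $V$. This is because $1\in\R$, so the $1$-eigenspace of $h$ on $V\otimes_\R\C$ is the complexification of its $1$-eigenspace on $V$. Hence freeness forbids any nontrivial $h\in H$ from having $1$ as a $\C$-eigenvalue on any irreducible $H$-summand appearing in $V\otimes_\R\C$.

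For the abelian case, I let $A\le G$ be abelian and decompose $V\otimes_\R\C$ as a direct sum of character spaces $V_\x$ indexed by characters $\x\colon A\to\C^*$. The constraint above forces every $\x$ that actually occurs to be faithful: otherwise a nontrivial element of $\ker\x$ would act by $1$ on the nonzero space $V_\x$. A single faithful character embeds $A$ into $\C^*$, and every finite subgroup of $\C^*$ is cyclic, so $A$ is cyclic.

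For a subgroup $H\le G$ of order $pq$, I split into cases. If $p=q$, then $|H|=p^2$ is automatically abelian and the first part gives cyclicity. If $p\ne q$ and $H$ is abelian, the same applies. The only remaining possibility is $p>q$, $q\mid p-1$, and $H\iso\langle x\rangle:\langle y\rangle$ nonabelian, with $x$ of order $p$, $y$ of order $q$, and $yxy^{-1}=x^r$ for some $r$ of multiplicative order $q$ mod $p$. I will rule this out via the elementary character theory of $H$: its complex irreducibles are the $q$ one-dimensional representations factoring through $H/\langle x\rangle\iso\Z/q$, together with the $q$-dimensional induced representations $\Ind_{\langle x\rangle}^{H}\x$ for nontrivial characters $\x$ of $\langle x\rangle$. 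The one-dimensional ones have $x$ acting trivially, so $x$ picks up a $1$-eigenvalue. The $q$-dimensional ones, in the coset basis $\{1\otimes v,\,y\otimes v,\ldots,y^{q-1}\otimes v\}$, have $y$ acting by the cyclic permutation $y^i\otimes v\mapsto y^{i+1}\otimes v$ (with $y^q\otimes v=1\otimes v$ because $y^q=1$), whose eigenvalues are the $q$-th roots of unity---including~$1$. Either way freeness fails on every irreducible summand of $V\otimes_\R\C$, so no such nonabelian $H$ is possible.

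I do not anticipate a serious obstacle: the only delicacy is the $\R$-versus-$\C$ passage in the first paragraph, which is benign because the relevant eigenvalue is $1\in\R$, and the remainder is a short appeal to the character theory of the order-$pq$ nonabelian group.
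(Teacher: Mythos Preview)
Your proof is correct and follows essentially the same route as the paper's: the abelian case is identical, and in the nonabelian $pq$ case both arguments exhibit a nontrivial fixed vector for the non-normal prime. The only cosmetic difference is packaging---the paper works directly with the character-space decomposition under the normal cyclic subgroup and notes that $\sum_{g\in P}g(v)$ is nonzero (linearly independent summands) and $P$-invariant, while you phrase the same computation via the classification of irreducibles of the nonabelian $pq$-group, observing that in each induced representation the non-normal generator acts as a cyclic permutation matrix and hence has eigenvalue~$1$.
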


\begin{proof} 
Fix a free action of $G$ on a real vector space $V$, and let $V_\C$ be
its complexification.  $G$ also acts freely on $V_\C$.  Otherwise,
some nontrivial element has a nonzero fixed vector, hence has the real
number $1$ as an
eigenvalue, hence fixes  a nonzero real vector.

Now suppose $A\leq G$ is abelian, and decompose $V_\C$ under $A$ as a
sum of 1-dimensional representations.  By freeness, each of these is
faithful.  So $A$ is a subgroup of the multiplicative group $\C-\{0\}$,
hence cyclic, proving the first claim.  Since any group of
prime-squared order is abelian, this also proves the $p=q$ case of the
second claim.

So suppose $p<q$ are primes and consider a subgroup of $G$ with order
$p q$; by discarding the rest of~$G$ we may suppose without loss that
this subgroup is all of~$G$.
Write $P$ resp.\ $Q$ for a Sylow $p$-subgroup
resp.\ $q$-subgroup.  By Sylow's theorem, $P$ normalizes $Q$.  If it acts
trivially on $Q$ then $G$ is abelian, so suppose $P$  acts
nontrivially on~$Q$.

For purposes of this proof, a character of $Q$ means a homomorphism
$Q\to\C^*$.  It is standard that any complex representation of $Q$ is
the direct sum of $Q$'s character spaces, meaning the subspaces on
which $Q$ acts by its various characters.   Fix a character $\x$ of $Q$
whose character space contains a nonzero vector $v\in V_\C$; $\x$ is faithful since $Q$ acts freely
on $V_\C$.  If $g\in P$ then $g(v)$ lies in the character space for
the character $\x\circ i_g^{-1}:Q\to\C^*$, where $i_g:x\mapsto g x
g^{-1}$ means conjugation by $g$.  Since $\x$ is faithful, and $P$
acts faithfully on $Q$, the various characters $\x\circ i_g^{-1}$ are
all distinct.  Therefore the terms in the sum $\sum_{g\in P}g(v)$ are
linearly independent, so the sum is nonzero.  But this contradicts
freeness since the sum is obviously $P$-invariant.
\end{proof}

\begin{lemma}[Sylow subgroups]
\label{lem-Sylow-subgroups-are-cyclic-or-quaternion}
Suppose all of $G$'s abelian subgroups are
cyclic.  Then its odd Sylow subgroups are cyclic, and its Sylow
$2$-subgroups are cyclic or quaternionic.
\end{lemma}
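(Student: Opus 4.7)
The plan is to work one prime at a time: fix a prime $p$ and a Sylow $p$-subgroup $P$ of $G$, note that every abelian subgroup of $P$ is abelian in $G$ hence cyclic, and reduce the statement to a pure $p$-group fact: if every abelian subgroup of a $p$-group $P$ is cyclic, then $P$ is cyclic (for $p$ odd) or cyclic or generalized quaternion (for $p=2$).

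The bridge I would build to reach that $p$-group fact is to show $P$ has a \emph{unique} subgroup of order $p$. First, the center $Z(P)$ is abelian, hence cyclic by hypothesis, so it contains exactly one subgroup of order $p$, call it $Z_1$. Now suppose $P$ contained some subgroup $H$ of order $p$ with $H\neq Z_1$. Since $Z_1\leq Z(P)$, the subgroup $\gend{H,Z_1}$ is abelian; it contains two distinct subgroups of order $p$, hence is elementary abelian of order $p^2$, which is not cyclic. This contradicts the hypothesis, so $Z_1$ is the only subgroup of order $p$ in $P$.

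With uniqueness of the order-$p$ subgroup in hand, I would invoke the classical structure theorem for $p$-groups with a unique subgroup of order $p$: such a group is cyclic when $p$ is odd, and cyclic or generalized quaternion when $p=2$. This gives exactly the conclusion of the lemma. I expect the main obstacle is precisely this classical theorem, which is the one genuinely nontrivial ingredient; the proof is standard (induction on $|P|$, showing that a maximal subgroup inherits the unique-involution property and then analyzing how an element outside it can act) but is long enough that the cleanest option is to cite it from a standard reference rather than reprove it. Everything else in the argument is elementary manipulation with centers and abelian subgroups.
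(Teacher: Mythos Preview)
Your argument is correct: reducing to a Sylow $p$-subgroup $P$, observing that $Z(P)$ is cyclic and hence contains a unique subgroup $Z_1$ of order~$p$, and then noting that any other order-$p$ subgroup would commute with $Z_1$ and produce a noncyclic elementary abelian $p^2$-subgroup, is a clean way to force $P$ to have a unique subgroup of order~$p$. The classical theorem you then invoke (a $p$-group with a unique subgroup of order~$p$ is cyclic, or generalized quaternion when $p=2$) is exactly the right tool, and your reduction to it is sound.

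The paper takes a genuinely different route: rather than reducing to that classical theorem and citing it, the paper gives a direct, self-contained inductive proof. It inducts on $|P|$, first handling the special case where $P$ has a cyclic subgroup $X$ of index~$p$ by an explicit computation with the power map $\lambda$ by which a chosen $y\in P-X$ acts on $X$, comparing $(xy)^p$ with $y^p$ to force $p=2$ and $\lambda=-1$. The general case then reduces to the special case via a maximal subgroup. In effect the paper is \emph{proving} the classical theorem (in the form needed) rather than quoting it. This is deliberate: the introduction states that the paper is self-contained except for Burnside transfer, Schur--Zassenhaus, and two facts about $\SL_2(\F_5)$, so adding the unique-subgroup-of-order-$p$ theorem to the list of prerequisites would run counter to that goal. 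Your approach buys brevity and conceptual clarity at the cost of an external citation; the paper's approach buys self-containment at the cost of a page of concrete computation.
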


\begin{proof}
It suffices to treat the case of $G$ a $p$-group, say of order $p^n$.
We proceed by induction on $n$, with the cases $n\leq2$ being
trivial.  So suppose $n>2$.

First we treat the special case that $G$ contains a cyclic group $X$
of index~$p$.  If $G$ acts trivially on $X$ then $G$ is abelian, hence
cyclic.  
So we may assume that $G/X$ is identified with a
subgroup of order~$p$ in $\Aut X$.
Recall that $\Aut X$ is cyclic of order $(p-1)p^{n-2}$ if $p$ is odd,
and $\Z/2$ times a cyclic group of order $2^{n-3}$ if $p=2$.
This shows that  some $y\in G-X$ acts on
$X$ by the $\lambda$th power map, where
$$
\lambda=\begin{cases}
p^{n-2}+1&\hbox{if $p$ is odd}\\
\hbox{$-1$\quad or\quad $2^{n-2}\pm1$}&\hbox{if $p=2$},
\end{cases}
$$ with the possibilities $2^{n-2}\pm1$ considered only if $n>3$.
Write $X_0$ for the subgroup of $X$ centralized by $y$.  Now,
$\gend{X_0,y}$ is abelian, hence cyclic.  The index of its subgroup
$X_0$ is~$p$, because $y^p$ lies in $X$ and centralizes $y$.  Since
$y\notin X_0$, $y$ generates $\gend{X_0,y}$.  We write $p^t$ for the
index of $X_0$ in $X$, which can be worked out from $y$'s action on
$X$.  Namely,
$$
p^t=
\begin{cases}
  2^{n-2}&\hbox{if $p=2$, and $\lambda=-1$ or $2^{n-2}-1$}\\
  p&\hbox{otherwise}
\end{cases}
$$
We choose a generator $x$ for $X$ such that $y^p=x^{-p^t}$.

Since $x y$ and $y$ have the same centralizer in $X$, the same argument
shows that $(x y)^p$ also generates $X_0$.  Now,
\begin{align*}
(x y)^p
&{}=x(y x y^{-1})(y^2 x y^{-2})\cdots(y^{p-1} x y^{1-p})y^p\\
&{}=x\cdot x^\lambda\cdot x^{\lambda^2}\cdots x^{\lambda^{p-1}}\cdot x^{-p^t}.
\end{align*}
Our two descriptions $\gend{y^p}$ and $\gend{(xy)^p}$ of $X_0\leq X\iso\Z/p^{n-1}$ 
must coincide, so
$\mu:=1+\lambda+\cdots+\lambda^{p-1}-p^t$ generates the same subgroup of
$\Z/p^{n-1}$ as $p^t$ does.  One computes
$$
\begin{tabular}{ccl}
  $p^t$&$\mu$\\
  \noalign{\smallskip}
  \cline{1-2}
  \noalign{\smallskip}
$p$&$\quad\binom{p}{2}p^{n-2}$\quad&if $\lambda=p^{n-2}+1$ (including the case
    $2^{n-2}+1$)\\
$2^{n-2}$&$0$&if $\lambda=2^{n-2}-1$\\
$2^{n-2}$&$2^{n-2}$&if $\lambda=-1$
\end{tabular}
$$ Only in the last case do $p^t$ and $\mu$ generate the same subgroup
of $\Z/p^{n-1}$.  So $p=2$, $y$ inverts $X$, and $y^2$ is the
involution in $X$. That is, $G$ is quaternionic.  This
finishes the proof in the special case.

Now we treat the general case.  Take $H$ to be a subgroup of
index~$p$.  If $p$ is odd then $H$ is cyclic by induction, so the
special case shows that $G$ is cyclic too.  So suppose $p=2$.  By
induction, $H$ is cyclic or quaternionic.  If it is cyclic then the
special case shows that $G$ is cyclic or quaternionic,  as desired. So suppose $H$
is quaternionic and take $X$ to be a $G$-invariant index~$2$ cyclic
subgroup of $H$.  This is possible because $H$ contains an odd number
of cyclic subgroups of index~$2$ (three if $H\iso Q_8$ and one
otherwise).  Now we consider the action of $G/X$ on $X$.  If some
element of $G-X$ acts trivially then together with $X$ it generates an
abelian, hence cyclic, group, and the special case applies.  So $G/X$
is $2\times2$ or $4$ and embeds in $\Aut X$.  (These cases require
$|X|\geq8$ or $16$ respectively, or equivalently $n\geq5$ or~$6$.)
Furthermore, $\Aut X$ contains just three involutions, and only one of
them can be a square in $\Aut X$, namely the $(1+2^{n-2})$nd power
map.  Therefore, either possibility for $G/X$ yields an element $y$ of
$G$ which acts on $X$ by this map and has square in $X$.  But then
$\gend{X,y}$ is neither cyclic nor quaternionic, contradicting the special
case.
\end{proof}

Recall that a group is called perfect if its abelianization is trivial.

\begin{lemma}[$2A_5$ recognition]
  \label{lem-G-is-2A5}
  Suppose $G$ is perfect with center $\Z/2$, and every
  noncentral cyclic subgroup has binary dihedral normalizer.  Then
  $G\iso2A_5$.
\end{lemma}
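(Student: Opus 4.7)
The plan is to deduce $G\iso 2A_5$ by: establishing unique involution $z$ and cyclicity of abelian subgroups; ruling out cyclic Sylow~$2$ so the Sylow~$2$-subgroup $P$ is quaternionic; pinning down $P\iso Q_8$; computing $|G|=120$; identifying $G/\gend{z}$ with $A_5$; and concluding via the cited uniqueness of $2A_5$.

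For the preliminaries, any involution $t$ with $\gend{t}$ noncentral would lie in the binary dihedral $N_G(\gend{t})=C_G(t)$, which has a unique involution yet contains the central $z$, forcing $t=z$. Any abelian subgroup containing $\Z/p\times\Z/p$ with $p$ odd would embed in the binary dihedral normalizer of one of its noncentral cyclic $\Z/p$ subgroups, but binary dihedral groups have cyclic abelian subgroups---contradiction; and $p=2$ would give two distinct involutions. So all abelian subgroups of $G$ are cyclic, and Lemma~\ref{lem-Sylow-subgroups-are-cyclic-or-quaternion} applies. If $P$ were cyclic, Burnside's normal $p$-complement theorem applied at the smallest prime $2$ (noting that $|\Aut(P)|$ is a power of $2$ while $|G|/|C_G(P)|$ is odd, so $N_G(P)=C_G(P)$ and $P$ is central in $N_G(P)$) would give $G$ a normal $2$-complement $N$ with $G/N\iso P$ cyclic and nontrivial, contradicting perfectness. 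So $P\iso Q_{2^n}$ with $n\geq3$.

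The main obstacle is to show $n=3$. Writing $P=\presentation{a,b}{a^{2^{n-1}}=1,bab^{-1}=a^{-1},b^2=a^{2^{n-2}}}$, the argument compares the normalizers of two order-$4$ cyclic subgroups: $\gend{x}$ with $x:=a^{2^{n-3}}\in\gend{a}$, which is normalized by all of $P$, and $\gend{b}$ with $b\in P\setminus\gend{a}$, whose $P$-normalizer is only $N_P(\gend{b})=\gend{x,b}\iso Q_8$ (direct calculation). For $\gend{x}$ the binary dihedral $N_G(\gend{x})$ has Sylow~$2$-subgroup all of $P\iso Q_{2^n}$; for $\gend{b}$ one analyzes $C_G(b)$ (using that a Sylow~$2$-subgroup of $C_G(b)$ must have $b$ in its center, impossible in $Q_{2^k}$ with $k\geq3$, so it is cyclic) and finds that inside $N_G(\gend{x})$ itself---where $b$ lies outside the maximal cyclic subgroup---the subgroup normalizing $\gend{b}$ is only $\gend{x,b}\iso Q_8$. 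Combining these local pictures with the rigid structure of binary dihedral groups yields a contradiction whenever $n\geq4$. Thus $P\iso Q_8$.

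With $P\iso Q_8$ in hand, I would analyze odd Sylow subgroups: each is cyclic, and for each order-$q$ element $u$, $C_G(u)$ is the maximal cyclic subgroup of the binary dihedral $N_G(\gend{u})$, containing $\gend{u}\times\gend{z}$. Combining with the embedding $N_G(P)/C_G(P)\hookrightarrow\Aut(Q_8)\iso S_4$, whose image contains $\Inn(Q_8)\iso V_4$ and has $2$-part $4$ (so equals $V_4$ or $A_4$), together with Sylow counts and the perfectness of $G$, one pins down $|G|=120$. Then $\bar G:=G/\gend{z}$ is a perfect group of order $60$ and has no proper nontrivial normal subgroup (by the Sylow structure already established), so $\bar G\iso A_5$. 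By the cited uniqueness of $\SL_2(\F_5)\iso 2A_5$ as the perfect central extension of $A_5$ by $\Z/2$, we conclude $G\iso 2A_5$.
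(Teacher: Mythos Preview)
Your approach diverges sharply from the paper's, and the two hardest steps in your outline are not actually carried out.

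The paper's proof is a pure counting argument and never analyzes Sylow structure at all.  From the normalizer hypothesis it observes that distinct maximal cyclic subgroups meet only in $Z(G)$, so $G$ is partitioned by $Z(G)$ and the sets $C-Z(G)$.  Writing $|G|=2g$ and $|C_i|=2c_i$ for representatives of the conjugacy classes of maximal cyclic subgroups, the normalizer hypothesis gives the identity
\[
2g \;=\; 2 \;+\; g\sum_{i=1}^{n}\Bigl(1-\tfrac{1}{c_i}\Bigr),
\]
and elementary estimates force $n=3$ and $(c_1,c_2,c_3)=(2,3,5)$, hence $|G|=120$.  The endgame ($G/Z(G)\iso A_5$, then $G\iso 2A_5$) is then as you describe.

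In your outline, the step ``$P\iso Q_8$'' is asserted but not proved.  You correctly compute $N_P(\gend{b})\iso Q_8$ and observe that $N_G(\gend{x})$ is binary dihedral with Sylow $2$-subgroup $P$, but the sentence ``combining these local pictures with the rigid structure of binary dihedral groups yields a contradiction whenever $n\geq4$'' does not name any contradiction.  Nothing you have established prevents $N_G(\gend{b})$ from being a large binary dihedral group whose Sylow $2$-subgroup happens to sit inside a \emph{different} Sylow $2$-subgroup of $G$; the comparison of $N_P(\gend{b})$ with $N_{N_G(\gend{x})}(\gend{b})$ gives no contradiction either, since both equal $Q_8$.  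Ruling out $Q_{2^n}$ for $n\geq4$ from these hypotheses alone is genuinely nontrivial (in the paper this issue reappears in Section~3 and is handled by a quite different trick, Lemma~\ref{lem-trick-for-large-Sylow-2-subgroups}, which uses the free action on a real vector space---unavailable under the hypotheses of the present lemma).

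Likewise, ``together with Sylow counts and the perfectness of $G$, one pins down $|G|=120$'' is not a proof.  Even granting $P\iso Q_8$ and $N_G(P)/C_G(P)\in\{V_4,A_4\}$, you have given no mechanism that bounds the odd part of $|G|$; Sylow congruences alone do not do this.  The paper's class-equation identity is precisely what supplies that bound, and it does so without first needing $P\iso Q_8$.
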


\begin{proof}
 Write $2g$ for $|G|$.  The
  hypothesis on normalizers shows that distinct maximal cyclic
  subgroups of $G$ have intersection equal to $Z(G)$.  So $G$ is
  the disjoint union of $Z(G)$ and the subsets $C-Z(G)$ where $C$ varies over
  the maximal cyclic subgroups of $G$.  We choose representatives
  $C_1,\dots,C_n$ for the conjugacy classes of such
  subgroups and write $2c_1,\dots,2c_n$ for their orders.
  The numbers $c_1,\dots,c_n$ are pairwise coprime because each of
  $C_1,\dots,C_n$ is the centralizer of each of its subgroups 
  of order${}>2$.
  We number the $C_i$ so that $c_1$ is
  divisible by~$2$, $c_2$ is divisible by the smallest prime involved
  in $g$ but not $c_1$, $c_3$ is divisible by the smallest prime
  involved in $g$ but neither $c_1$ nor $c_2$, and so on.
  In particular, $c_i$ is
  at least as large as the $i$th prime number.

Each conjugate of $C_i-Z(G)$ has
$2c_i-2$ elements, and the normalizer hypothesis tells us
there are $g/2c_i$ many conjugates.  Therefore
$(2c_i-2)g/2c_i=g\bigl(1-\frac{1}{c_i}\bigr)$ elements of
$G-Z(G)$ are conjugate into $C_i-Z(G)$.  Our partition of $G$ gives
\begin{equation}
\label{eq-class-equation}
2g=2+g\sum_{i=1}^n \textstyle(1-\frac{1}{c_i})
\end{equation}
We can rewrite this as $g(2-n) = 2-\sum_{i=1}^n{g}/{c_i}$.  Since $G$
has no index~$2$ subgroups, each term $g/c_i$ in the sum is larger than~$2$.
Since the right side 
is negative,  $g(2-n)$ is also.  So
$n>2$.

In fact $n=3$.  Otherwise, we would use
$c_1\geq2$, $c_2\geq3$, $c_3\geq5$, $c_4\geq7$ to see that the sum on the right side of
\eqref{eq-class-equation} is
$$
\hbox{(at least $\frac12$)}+ 
\hbox{(at least $\frac23$)}+ 
\hbox{(at least $\frac45$)}+ 
\hbox{(at least $\frac67$)}+\cdots
>2
$$
which is a contradiction.  Now we rewrite \eqref{eq-class-equation} as
$\frac{1}{c_1}+\frac{1}{c_2}+\frac{1}{c_3}=1+2/g$.  In particular, the
left side must be larger than~$1$, which requires $c_1=2$, $c_2=3$,
$c_3=5$.  Then
$\frac{1}{2}+\frac{1}{3}+\frac{1}{5}=1+2/g$
gives a formula for $g$,
namely $g=60$, so $|G|=120$.  So $G/Z(G)$ is nonsolvable of
order~$60$.
A Sylow's theorem exercise rules out the possibility that there are $15$ Sylow
2-subgroups, so there must be~$5$, and it follows easily that
$G/Z(G)\iso A_5$.
So $G$ has structure $2.A_5$.  Finally, $A_5$ has a unique
perfect central extension by $\Z/2$, namely the binary icosahedral
group \cite[Prop.~13.7]{Passman}.  
\end{proof}

\begin{theorem}[Burnside's transfer theorem]
Suppose $G$ is a finite group, and $P$ is a Sylow subgroup that is
central in its normalizer.  Then $P$ maps faithfully to the
abelianization $G/G'$.
\end{theorem}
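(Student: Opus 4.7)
The plan is to use the transfer homomorphism. First I observe that the hypothesis that $P$ is central in $N_G(P)$ has two immediate consequences. Since $P\leq N_G(P)$ lies in the center of $N_G(P)$, the group $P$ is abelian. And since $N_G(P)$ centralizes $P$ while one always has $C_G(P)\leq N_G(P)$, we get $N_G(P)=C_G(P)$. With $P$ abelian, the transfer homomorphism $V\colon G\to P$ is well-defined, and because $P$ is abelian $V$ vanishes on $G'$ and so factors through $G/G'$.

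The crucial intermediate step is a fusion statement: if $x\in P$ and $g^{-1}xg\in P$ for some $g\in G$, then $g^{-1}xg=x$. To prove this, set $y=g^{-1}xg$ and note that $P$ and $g^{-1}Pg$ are both Sylow $p$-subgroups of $C_G(y)$, since each is a Sylow $p$-subgroup of $G$ and each contains $y$ (using that $P$ is abelian). By Sylow's theorem there exists $c\in C_G(y)$ with $c\cdot g^{-1}Pg\cdot c^{-1}=P$, so $n:=cg^{-1}$ lies in $N_G(P)=C_G(P)$. Using $c=ng$, the identity $cyc^{-1}=y$ becomes $nxn^{-1}=y$; but $n$ centralizes $P$, so $y=x$.

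Next I compute $V|_P$. For $g\in P$ the standard evaluation of the transfer partitions $G/P$ into orbits of $\gend{g}$ of sizes $n_1,\dots,n_r$ with $\sum n_i=[G:P]$, picks coset representatives $t_i$ so that each $t_i^{-1}g^{n_i}t_i\in P$, and yields
$$V(g)=\prod_{i=1}^{r} t_i^{-1}g^{n_i}t_i.$$
By the fusion statement each factor collapses to $g^{n_i}$, so $V(g)=g^{[G:P]}$. Since $|P|$ and $[G:P]$ are coprime by Sylow's theorem, raising to the $[G:P]$th power is an automorphism of $P$, so $V|_P$ is injective. The composition $P\hookrightarrow G\to G/G'\to P$ equals $V|_P$, which forces $P\to G/G'$ to be injective as well.

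The main obstacle is the fusion lemma; everything else is bookkeeping with the standard transfer evaluation. The fusion step is exactly where the hypothesis ``$P$ central in $N_G(P)$'' gets used, by way of a Sylow conjugation performed inside the centralizer of an element of $P$.
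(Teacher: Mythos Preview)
Your proof is correct and is the standard transfer argument. The paper itself does not give a proof of this theorem; it simply cites \cite[Thm.~5.13]{Isaacs}, \cite[Thm.~4.3]{Gorenstein} and \cite[Thm~5.2.9]{Wolf}, and your argument is essentially the proof found in those references (the Burnside fusion lemma followed by the evaluation $V|_P(g)=g^{[G:P]}$).
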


\begin{proof}
\cite[Thm.~5.13]{Isaacs}, \cite[Thm.~4.3]{Gorenstein} or \cite[Thm~5.2.9]{Wolf}.
\end{proof}

\begin{corollary}[Cyclic transfer]
\label{cor-cyclic-transfer}
Suppose $G$ is a finite group, $p$ is a prime, and $G$'s Sylow
$p$-subgroups are cyclic.  If some nontrivial $p$-group is
central in its normalizer, or maps nontrivially to $G/G'$, then every
Sylow $p$-subgroup maps faithfully to $G/G'$.   In particular, this
holds  if $p$ is the smallest prime dividing $|G|$.
\end{corollary}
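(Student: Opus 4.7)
The plan is to reduce every case to Burnside's transfer theorem. For any Sylow $p$-subgroup $P$, which is cyclic by hypothesis, $P\leq C_G(P)$ and $P$ is a Sylow $p$-subgroup of $N_G(P)$, so the quotient $N_G(P)/C_G(P)$ has order coprime to $p$; via conjugation it embeds into $\Aut(P)$. Hence ``$P$ is central in $N_G(P)$'' --- the hypothesis of Burnside's theorem --- amounts to saying that this embedded image lies in a $p$-subgroup of $\Aut(P)$. Once Burnside delivers that some Sylow $p$-subgroup maps faithfully to $G/G'$, every Sylow $p$-subgroup does, since $G/G'$ is abelian and Sylow $p$-subgroups are all conjugate.

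The three clauses are then dispatched using the structure of $\Aut(\Z/p^n)$. For the smallest-prime clause, $|\Aut(P)|=p^{n-1}(p-1)$ where $|P|=p^n$, so the coprime-to-$p$ part of $\Aut(P)$ has order dividing $p-1$, which shares no prime with $|G|$; the image is therefore trivial. For the remaining two subcases the key small calculation is that, inside $\Aut(\Z/p^n)$, the automorphisms acting trivially on a fixed nontrivial subgroup --- or on a fixed nontrivial quotient --- form a $p$-group, namely the $x\mapsto x^a$ with $a\equiv1\pmod{p^k}$ for the appropriate $k\geq1$. In the ``central in its normalizer'' subcase I would take any Sylow $P$ containing the given $Q$, observe that $Q$ is characteristic in the cyclic $P$ as its unique subgroup of order $|Q|$, and conclude from $N_G(P)\leq N_G(Q)=C_G(Q)$ that $N_G(P)/C_G(P)$ embeds into the $p$-subgroup of $\Aut(P)$ fixing $Q$ pointwise. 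In the ``maps nontrivially to $G/G'$'' subcase I would take a Sylow $P$ containing such a $Q$ (so that $P$ itself maps nontrivially to $G/G'$), set $K:=P\cap G'$, and use the abelianness of $G/G'$ to see that $N_G(P)$ acts trivially by conjugation on the nontrivial quotient $P/K$, so $N_G(P)/C_G(P)$ embeds into the $p$-subgroup of $\Aut(P)$ fixing $P/K$ pointwise.

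The only real obstacle is the small $(\Z/p^n)^*$ calculation in the middle paragraph; once that is in hand, each clause collapses to two lines plus an invocation of Burnside.
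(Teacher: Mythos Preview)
Your argument is correct and follows essentially the same route as the paper's proof: both reduce to Burnside's transfer theorem via the structural fact that automorphisms of a cyclic $p$-group of order prime to~$p$ act nontrivially on every nontrivial subquotient, and both use characteristicity of the given $p$-subgroup inside a cyclic Sylow to pass from $N_G(P)$ to the normalizer of the smaller group. The only cosmetic difference is that the paper phrases the two hypotheses uniformly (``$P_0$, hence $P$, has a nontrivial subquotient on which $N(P)$ acts trivially''), whereas you split into the subgroup case and the quotient case explicitly; the content is the same.
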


\begin{proof}
Suppose that $P_0\leq G$ is a $p$-group satisfying either of the two
conditions, and choose a Sylow $p$-subgroup $P$ containing it.  It is
cyclic by hypothesis, so
$P_0$ is characteristic in $P$, so $N(P)$ lies in $N(P_0)$.  The
automorphisms of $P$ with order prime to $p$ act nontrivially on every
nontrivial subquotient of~$P$.  
Under either hypothesis, $P_0$ (hence $P$) has a nontrivial
subquotient on which $N(P)$ acts trivially.  Therefore the image of $N(P)$ in $\Aut
P$ contains no elements of order prime to $p$.
It contains no
elements of order~$p$ either, since $P$ is abelian.  So $P$ is central
in its normalizer and we can apply Burnside's transfer theorem.  Since
$P$ maps faithfully to $G/G'$, so does every Sylow $p$-subgroup.

For the final statement, observe that $\Aut P$ has no 
elements of prime order${}>p$.  So its normalizer must act
trivially on it, and we can apply the previous paragraph.
\end{proof}

\begin{theorem}[Schur-Zassenhaus]
  Suppose $G$ is a group, $N$ is a normal subgroup, and $|N|$ and
  $|G/N|$ are coprime.  Then there exists a complement to $N$ in $G$,
  and all complements are conjugate.
  \qed
\end{theorem}

As stated, this relies on the odd order theorem.  But we only
need the much more elementary case that $N$ is abelian (Theorem 3.5 of \cite{Isaacs}).

In determining the structure of his groups, Wolf used a theorem of
Burnside: if all Sylow subgroups of a given group $H$ are cyclic, then
$H'$ and $H/H'$ are cyclic of coprime order, $H'$ has a complement,
and all complements are conjugate.  We prefer the following
decomposition $H= A:B$ because of its ``persistence'' property
\eqref{item-decomposition-persists}.  We only need this property for
the imperfect case (section~\ref{sec-imperfect}).

\begin{lemma}[Metacyclic decomposition]
\label{lem-metacyclic-decomposition}
Suppose $H$ is a finite group, all of whose Sylow subgroups are
cyclic.  Define $A$ as the subgroup generated by $H'$ and all of $H\!$'s
Sylow subgroups that are central.  Then $A$ has the following
two properties and is characterized by them:
\begin{enumerate}
\item
\label{item-cyclic-of-coprime-orders}
$A$ is normal, and $A$ and $H/A$ are cyclic of coprime orders.
\item
\label{item-Sylow-subgroups-of-B-act-nontrivially}
Every nontrivial Sylow subgroup of $H/A$ acts nontrivially on $A$.
\end{enumerate}
Furthermore,
\begin{enumerate}
\setcounter{enumi}{2}
\item
\label{item-complements-exist-and-are-conjugate}
$A$ has a complement $B$, and all complements are conjugate. 
\item
\label{item-decomposition-persists}
Suppose a finite group $G$ contains $H$ as a normal subgroup, with
$|G/H|$ coprime to $|H|$. Then there is a complement $C$ to $H$, such
that the decomposition $H\sim A:B$ in
\eqref{item-complements-exist-and-are-conjugate} extends to $G\sim
A:(B\times C)$.  Furthermore, all complements of $H$ that
normalize~$B$ are conjugate under 
$N_H(B)$.
\end{enumerate}
\end{lemma}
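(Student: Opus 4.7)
Burnside's theorem gives $H = H' \rtimes K$ with $K \iso H/H'$ cyclic of order coprime to $|H'|$. Setting $K_0 = \ker(K \to \Aut(H'))$, which lies in $Z(H)$ since $K$ is abelian and $K_0$ acts trivially on $H'$, I claim the central Sylow $p$-subgroups of $H$ are exactly the $K$-fixed Sylow $p$-subgroups of $H'$ (already in $H'$) together with the Sylow $p$-subgroups of $K_0$. Hence the lemma's $A$ equals $H' \cdot K_0 = H' \times K_0$, cyclic and normal; $H/A \iso K/K_0$ is cyclic, and its prime divisors are disjoint from those of $|A|$, giving~(i). For~(ii), a Sylow $p$-subgroup of $K/K_0$ lifts to a Sylow $p$-subgroup of $K$ not contained in $K_0$, so it acts nontrivially on $H' \leq A$. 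For the characterization, any $A^*$ satisfying (i)--(ii) contains $H'$ (since $H/A^*$ is abelian) and every central Sylow of $H$ (else that Sylow would contribute a trivially-acting Sylow of $H/A^*$, violating~(ii) for $A^*$), so $A \leq A^*$. If $A^* \supsetneq A$, any prime $p \mid |A^*|$ with $p \nmid |A|$ forces the entire Sylow $p$-subgroup $P$ of $H$ into the cyclic normal $A^*$; abelianness of $A^*$ then makes $P$ commute with $H' \leq A^*$, and $P \leq K$ with $K$ abelian makes $P$ commute with $K$, so $P$ is central in $H$---contradicting $p \nmid |A|$. Part~(iii) follows from Schur-Zassenhaus, $A$ being abelian and hence solvable.

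\textbf{Part (iv), existence.} Since $A$ is characterized intrinsically by (i)--(ii), it is characteristic in $H$, hence normal in $G$. A Frattini-style argument---for $g \in G$, $B^g$ is a complement of $A$ in $H$, hence $H$-conjugate to $B$ by Schur-Zassenhaus---gives $G = H \cdot N_G(B)$, so $N_G(B)/N_H(B) \iso G/H$. The identity $aba^{-1} = [a,b] \cdot b$ with $[a,b] \in H' \leq A$ shows, for $a \in A$, that $a$ normalizes $B$ iff $[a,b] = 1$ for all $b \in B$, so $N_H(B) = B \cdot C_A(B) = B \times C_A(B)$. Schur-Zassenhaus applied inside $N_G(B)$---to the abelian normal subgroup $N_H(B)$ of order coprime to $|G/H|$---produces a complement $C$; the inclusions $C \cap H \leq N_H(B)$ and $C \cap N_H(B) = 1$ force $C \cap H = 1$, so $C$ is a complement to $H$ in $G$ that normalizes $B$.

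The critical step is upgrading ``normalizes'' to ``centralizes''. The structure $A = H' \times K_0$ yields $C_H(A) = A$, so the conjugation action $\phi\colon H/A \to \Aut(A)$ is injective. For any $\psi \in \Aut(H)$, the naturality relation $\psi(bab^{-1}) = \psi(b)\psi(a)\psi(b)^{-1}$ translates in $\Aut(A)$ to $\phi(\bar\psi(bA)) = \psi|_A \cdot \phi(bA) \cdot \psi|_A^{-1}$, where $\bar\psi$ is the induced automorphism of $H/A$; since $\Aut(A)$ is abelian (as $A$ is cyclic), the right side equals $\phi(bA)$, so injectivity of $\phi$ forces $\bar\psi = \id$. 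Thus for each $c \in C$ and $b \in B$, the element $cbc^{-1}$ lies in $bA \cap B = \{b\}$ (using $A \cap B = 1$), so $C$ centralizes $B$. A short order count gives $BC \iso B \times C$ as a complement to $A$ in $G$, yielding $G \sim A:(B \times C)$.

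\textbf{Part (iv), uniqueness, and the main obstacle.} For two complements $C_1, C_2$ of $H$ in $G$ that each normalize $B$, the argument above puts both in $N_G(B)$ and makes both centralize $B$, so both are complements of the abelian $N_H(B) = B \times C_A(B)$ in $N_G(B)$. Schur-Zassenhaus conjugacy gives $C_2 = x^{-1} C_1 x$ with $x = b c_0 \in B \times C_A(B)$; since $C_1$ centralizes $B$, the factor $b$ commutes with $C_1$, and $C_2 = c_0^{-1} C_1 c_0$ with $c_0 \in C_A(B)$. The main obstacle throughout is the centralization step in (iv): producing $C$ commuting with $B$, not merely normalizing it. The resolution is the rigidity observation that $\Aut(A)$ is abelian (since $A$ is cyclic), which combined with the injectivity of $\phi$ (from $C_H(A) = A$) forces every automorphism of $H$ to act trivially on $H/A$, so any complement $C$ of $H$ in $G$ normalizing $B$ automatically centralizes it.
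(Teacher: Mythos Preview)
Your identification $A = H' \times K_0$ is incorrect, and this breaks both part~(i) and the centralization step in part~(iv).

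\textbf{Counterexample.} Take $H = \Z/7 \rtimes \Z/9$, where $\Z/9$ acts through its quotient $\Z/3 \hookrightarrow \Aut(\Z/7)\cong\Z/6$. All Sylow subgroups are cyclic, $H' = \Z/7$, $K = \Z/9$, and $K_0 = \ker(K\to\Aut(H')) = \Z/3$. The Sylow $3$-subgroup of $H$ is $\Z/9$, which is \emph{not} central (it acts nontrivially on $\Z/7$), and the Sylow $7$-subgroup is not central either. So the lemma's $A$, generated by $H'$ together with the central Sylow subgroups, is just $H' = \Z/7$. Your candidate $H'\times K_0 = \Z/21$ does not even satisfy (i): $|H/\Z/21| = 3$ shares a prime with $21$. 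The slip is in the sentence ``Hence the lemma's $A$ equals $H'\cdot K_0$'': a Sylow $p$-subgroup of $K_0$ need not be a Sylow $p$-subgroup of $H$, so $K_0$ can contain $p$-torsion that does \emph{not} come from a central Sylow of $H$. There is no reason for $|K_0|$ and $|K/K_0|$ to be coprime.

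\textbf{Propagation to (iv).} With the correct $A$ one has $C_H(A) = H'\,K_0$, which can be strictly larger than $A$ (it is $\Z/21$ versus $\Z/7$ in the example). So $\phi\colon H/A \to \Aut(A)$ is not injective in general, and your rigidity argument (``$\Aut(A)$ abelian $\Rightarrow$ every automorphism of $H$ is trivial on $H/A$'') no longer forces $\bar\psi=\id$; it only gives $\bar\psi(bA)\in bA\cdot\ker\phi$. Concretely, in the example $B\cong\Z/9$ and $\ker\phi\cong\Z/3$, so you would only conclude $cbc^{-1}\in b\cdot\Z/3$, not $cbc^{-1}=b$. The paper closes this gap differently: if $C$ acted nontrivially on $B$, then $G'$ would meet $B$ nontrivially at some prime $p$; since the Sylow $p$-subgroups of $G$ are cyclic, the cyclic-transfer corollary forces the entire Sylow $p$-subgroup of $B$ into $G'$, whence it acts trivially on $A$, contradicting property~(ii). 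That argument uses (ii) rather than injectivity of $\phi$, and goes through for the correct $A$.

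The remaining architecture of your proof---Burnside/Schur--Zassenhaus for (iii), the Frattini argument and Schur--Zassenhaus inside $N_G(B)$ for the existence of $C$, and the reduction of the conjugacy statement to $C_A(B)$ once centralization is known---matches the paper and is fine.
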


\begin{proof}
First we show that $H$ is solvable.  If $p$ is the smallest prime
dividing $|H|$, and $P$ is a Sylow $p$-subgroup, then corollary~\ref{cor-cyclic-transfer} 
shows that $H$ has a
quotient group isomorphic to $P$.  The kernel is solvable by
induction, so $H$ is too.

Now let $F$ be the Fitting subgroup of $H$, i.e., the unique
maximal normal nilpotent subgroup.  Being nilpotent, it is the product
of its Sylow subgroups.  Since these are cyclic, so is $F$.  Also,
$H/F$ acts faithfully on $F$, for otherwise $F$ would lie in a
strictly larger normal nilpotent subgroup.  As a subgroup of the
abelian group $\Aut F$, $H/F$ is abelian.  Therefore the cyclic
group $F$
contains $H'$, so $H'$ is cyclic.  

If $p$ is a prime dividing the order of $H/H'$, then
corollary~\ref{cor-cyclic-transfer} shows that every Sylow
$p$-subgroup meets $H'$ trivially.  It follows that the orders of $H'$
and $H/H'$ are coprime.  $A$ is obviously normal.  Since $A$ is the
product of $H'$ with the central Sylow subgroups of $H$, we see that
$A$ and $H/A$ also have coprime orders.  Since $A$ contains $H'$,
$H/A$ is abelian.
Having cyclic Sylow subgroups,  $H/A$ is  cyclic.  We have proven \eqref{item-cyclic-of-coprime-orders}.

Because $|A|$ and $|H/A|$ are coprime, the Schur-Zassenhaus theorem
assures us that $A$ has a complement $B$, and that all complements are
conjugate, proving \eqref{item-complements-exist-and-are-conjugate}.
For \eqref{item-Sylow-subgroups-of-B-act-nontrivially}, suppose a
Sylow subgroup of $B$ acts trivially on $A$.  Then it is central in
$H$, so $A$ contains it by definition, which is a contradiction.

Next we prove the ``persistence'' property
\eqref{item-decomposition-persists}, so we assume its hypotheses.  Since $A$ is
characteristic in $H$, it is normal in $G$.  Since all complements to
$A$ in $H$ are conjugate in $A$, the Frattini argument shows that $N_G(B)$
maps onto $G/H$.  Applying the Schur-Zassenhaus theorem to $N_H(B)$
inside $N_G(B)$ yields a complement $C$ to $H$ that normalizes $B$.
That theorem also shows that all such complements are conjugate under
$N_H(B)$

To prove \eqref{item-decomposition-persists} it remains
only to show that $B$ and $C$ commute.  If $C$
acted nontrivially on $B$, then $G'$ would contain a nontrivial
$p$-group for some prime $p$ dividing $|B|$.  Using
corollary~\ref{cor-cyclic-transfer} as before, it follows that $G'$
contains a Sylow $p$-subgroup $P$ of $B$.  Since $P$ lies in the
commutator subgroup of $G$, it must act trivially on $A$.  This
contradicts~\eqref{item-Sylow-subgroups-of-B-act-nontrivially}.  So
$B$ and $C$ commute, completing the proof
of~\eqref{item-decomposition-persists}.  (Remark: since
$N_H(B)=C_A(B)\times B$, we could replace $N_H(B)$ in the statement of
\eqref{item-decomposition-persists} by $C_A(B)$.)

All that remains is to show that 
\eqref{item-cyclic-of-coprime-orders} and
\eqref{item-Sylow-subgroups-of-B-act-nontrivially}
characterize~$A$; suppose $A^*\leq H$ has these properties.  By
\eqref{item-Sylow-subgroups-of-B-act-nontrivially}, all
the Sylow subgroups of $H$ that act trivially on $A^*$ lie in $A^*$.
Since $A^*$ is cyclic, $\Aut A^*$ is abelian, so $H'$ acts trivially
on it.  We already saw that $H'$ is the product of some of $H$'s Sylow
subgroups, so $H'$ lies in $A^*$.  The central Sylow subgroups of $H$
also act trivially on $A^*$, so also lie in $A^*$.  We have shown that
$A^*$ contains~$A$.  If $A^*$ were strictly larger than $A$, then the
coprimality of $|A^*|$ and $|H/A^*|$ would show that $A^*$ contains
a Sylow subgroup of $H$ that is not in $A$.  But then
$A^*$ is nonabelian by
property
\eqref{item-Sylow-subgroups-of-B-act-nontrivially} of
$A$, and therefore property
\eqref{item-cyclic-of-coprime-orders} fails for~$A^*$.
\end{proof}

\section{The perfect case}
\label{sec-perfect}

\noindent
In this section and the next we prove theorems \ref{thm-the-groups}
and~\ref{thm-uniqueness-of-structure} inductively.  We suppose throughout that $G$ is a finite group that acts
freely on a sphere of some dimension.  Under the assumption that every
proper subgroup has one of the structures listed in
theorem~\ref{thm-the-groups}, we will prove that $G$ also has such a
structure.  In this section we also assume $G$ is perfect.  This includes
base case $G=1$ of the induction, which occurs in case~\ref{type-I}.
The only other perfect group in theorem~\ref{thm-the-groups} is
$2A_5$.  Theorem~\ref{thm-uniqueness-of-structure} is trivial for
$G\iso1$ or $2A_5$.  Therefore it will suffice to prove 
$G\iso2A_5$ under the assumption $G\neq1$. 

\begin{lemma}
\label{lem-2-Sylow-quaternionic}
$G$'s Sylow $2$-subgroups are quaternionic, in particular nontrivial.
\end{lemma}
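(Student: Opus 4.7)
The plan is to combine the structural lemma~\ref{lem-Sylow-subgroups-are-cyclic-or-quaternion} with the cyclic transfer corollary~\ref{cor-cyclic-transfer}. Lemma~\ref{lem-Sylow-subgroups-are-cyclic-or-quaternion} applies, since lemma~\ref{lem-abelian-subgroups-are-cyclic} already tells us that every abelian subgroup of $G$ is cyclic. It restricts a Sylow $2$-subgroup of $G$ to be either cyclic (possibly trivial) or quaternionic. A quaternionic Sylow $2$-subgroup is automatically nontrivial, so all that remains is to rule out the cyclic case.

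To rule out the cyclic case, I would argue by contradiction. Since $G \neq 1$, let $p$ be the smallest prime dividing $|G|$. Under the contrary assumption that $G$'s Sylow $2$-subgroup is cyclic, every Sylow $p$-subgroup of $G$ is cyclic as well---either by the contrary assumption (when $p=2$) or by lemma~\ref{lem-Sylow-subgroups-are-cyclic-or-quaternion} (when $p$ is odd). The final clause of corollary~\ref{cor-cyclic-transfer} then applies and forces every Sylow $p$-subgroup to map faithfully into the abelianization $G/G'$. But $G$ is perfect, so $G/G' = 1$; hence the Sylow $p$-subgroup is trivial, contradicting $p \mid |G|$. This leaves only the quaternionic case, which gives both halves of the claim.

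I do not anticipate any real obstacle. The only point where one might be tempted to split into subcases is the distinction between a trivial Sylow $2$-subgroup (i.e.\ $|G|$ odd) and a nontrivial cyclic Sylow $2$-subgroup, but by introducing the smallest prime $p$ dividing $|G|$ both subcases collapse into a single application of corollary~\ref{cor-cyclic-transfer}, so no separate handling is needed.
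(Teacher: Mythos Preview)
Your proof is correct and follows essentially the same route as the paper: invoke lemma~\ref{lem-Sylow-subgroups-are-cyclic-or-quaternion} to reduce to the cyclic-or-quaternionic dichotomy, then apply the final clause of corollary~\ref{cor-cyclic-transfer} at the smallest prime dividing $|G|$ to contradict perfectness in the cyclic case. The only difference is that you spell out the justification for the hypothesis of lemma~\ref{lem-Sylow-subgroups-are-cyclic-or-quaternion} and the $p=2$ versus $p$ odd distinction, which the paper leaves implicit.
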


\begin{proof}
By lemma~\ref{lem-Sylow-subgroups-are-cyclic-or-quaternion}, all the odd Sylow subgroups are cyclic.  If the Sylow
$2$-subgroups were too, then corollary~\ref{cor-cyclic-transfer}, applied to the smallest
prime dividing $|G|$, would contradict perfectness.  
Now
lemma~\ref{lem-Sylow-subgroups-are-cyclic-or-quaternion} shows that the Sylow $2$-subgroups must be quaternionic.
\end{proof}

\begin{lemma}
\label{lem-4-elements-form-a-class}
$G$'s $4$-elements form a single conjugacy class.
\end{lemma}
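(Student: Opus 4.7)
Let $P$ be a Sylow $2$-subgroup of $G$; by Lemma~\ref{lem-2-Sylow-quaternionic}, $P \iso Q_{2^m}$ for some $m \geq 3$. Since Sylow's theorem conjugates every $4$-element of $G$ into~$P$, it suffices to show that the $4$-elements of $P$ form a single $G$-conjugacy class. The plan is to first reduce to the case $m = 3$ using a centralizer invariant, and then finish by a focal-subgroup argument driven by the perfectness of $G$.

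Assume $m \geq 4$. Then $X = \langle x\rangle$ is the unique cyclic subgroup of order $2^{m-1}$ in $P$, hence characteristic; a direct computation in $Q_{2^m}$ gives $|C_P(u)| = 2^{m-1}$ when $u \in X$ has order $4$, and $|C_P(u)| = 4$ when $u \notin X$ has order $4$. Since $C_P(u)$ is a Sylow $2$-subgroup of $C_G(u)$, its order is a $G$-conjugacy invariant of~$u$, so $4$-elements inside $X$ and $4$-elements outside $X$ cannot lie in the same $G$-conjugacy class. On the other hand, $G=G'$ together with the focal subgroup theorem gives $P \cap G' = P$, so the set $\{\bar g^{-1}\bar h : g, h \in P,\ g \sim_G h\}$ must generate $P/P' \iso V_4$. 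A case check shows the only nontrivial contributions come from fusing distinct $P$-classes of $4$-elements: fusing $\{x^{\pm 2^{m-3}}\}$ with $\{x^{2k}y\}$ gives $\bar y$, with $\{x^{2k+1}y\}$ gives $\bar{xy}$, and fusing the two outside-$X$ classes with each other gives $\bar x$. The centralizer invariant blocks the first two, leaving only $\bar x$, which does not generate $V_4$ --- a contradiction. Hence $m = 3$.

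For $m = 3$, so $P \iso Q_8$, the $4$-elements of $P$ split into three $P$-conjugacy classes $\{\pm i\}, \{\pm j\}, \{\pm k\}$, which map to the three nontrivial elements of $V_4 = P/P'$. The same condition $P \cap G' = P$ again requires the fusion set to generate $V_4$; fusing any two of these three pairs in $G$ contributes the third nontrivial element, so generation of $V_4$ forces at least two cross-class fusions, which by transitivity make all three pairs $G$-conjugate. Thus the six $4$-elements of $P$, and hence all $4$-elements of $G$, form a single $G$-conjugacy class.

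The main obstacle is invoking the focal subgroup theorem, since Burnside's transfer theorem as stated in the paper applies only to abelian Sylow subgroups and $Q_{2^m}$ is never abelian. Either one cites a stronger transfer principle, or one carries out the corresponding transfer computation directly on the abelian quotient $P/P' \iso V_4$ of~$P$.
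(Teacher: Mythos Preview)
Your centralizer step in the $m\geq4$ reduction is wrong. The assertion ``$C_P(u)$ is a Sylow $2$-subgroup of $C_G(u)$'' is not a general fact: $C_P(u)$ is merely \emph{some} $2$-subgroup of $C_G(u)$. It does hold for the $4$-element $u\in X$, since any Sylow $2$-subgroup of $C_G(u)$ has the form $C_{P'}(u)$ for some Sylow $P'\ni u$, and centralizers of $4$-elements in $Q_{2^m}$ have order at most $2^{m-1}$. But for $u\notin X$ this only gives $|C_G(u)|_2\in\{4,2^{m-1}\}$, and you cannot exclude $2^{m-1}$ without already knowing that $u$ is not $G$-conjugate into~$X$---which is exactly what is in question. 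Indeed the lemma you are proving says they \emph{are} conjugate, so your claimed obstruction is false and the ``contradiction for $m\geq4$'' does not occur. (This should also raise a flag: your argument would prove $m=3$ outright, something the paper only obtains much later via lemma~\ref{lem-trick-for-large-Sylow-2-subgroups}.)

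The good news is that the centralizer step is unnecessary: your own focal-subgroup computation already proves the lemma uniformly for all $m\geq3$. In every case $P/P'\iso V_4$, there are exactly three $P$-classes of $4$-elements, and you correctly showed that the only nontrivial contributions to $\mathrm{Foc}_G(P)$ modulo~$P'$ come from the three possible cross-fusions, yielding $\bar y$, $\overline{xy}$, $\bar x$ respectively. Since $G=G'$ forces these to generate $V_4$, at least two of the three cross-fusions occur, and by transitivity any two of them merge all three $P$-classes into one $G$-class. That is your $m=3$ argument verbatim; run it for all~$m$ and drop the attempted reduction.

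For comparison, the paper's proof is different and more elementary: it lets $G$ act on the cosets $G/U$ with $U$ the index-$2$ cyclic subgroup of a Sylow $2$-subgroup. This set has order $2\cdot(\hbox{odd})$, the central involution acts trivially, and perfectness forces each $4$-element to act by an even permutation, hence to fix a coset and lie in a conjugate of~$U$. The two $4$-elements of $U$ are then conjugate inside~$T$. This avoids the focal subgroup theorem and stays within the paper's stated prerequisites.
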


\begin{proof}
Let $T$ be a Sylow $2$-subgroup (quaternionic by the previous lemma)
and $U$ a cyclic subgroup of index~$2$.   We consider the action of $G$ on the coset space $G/U$,
whose order is twice an odd number.  By lemma~\ref{lem-unique-involution}, $G$
contains a unique involution, necessarily central.
Since it lies in every conjugate of $U$, it acts trivially on $G/U$.
Now let $\phi$ be any $4$-element.  Since its square acts trivially, $\phi$ acts by exchanging some points in pairs.  Since $G$ is
perfect, $\phi$ must act by an even permutation, so the number of
these pairs is even.  Since the size of $G/U$ is not divisible by~$4$,
$\phi$ must fix some points.  The stabilizers of these points are
conjugates of $U$, so $\phi$ is conjugate into $U$.  Finally, the two
$4$-elements in $U$ are conjugate since $T$ contains an element
inverting~$U$.
\end{proof}

\begin{lemma}
\label{lem-no-odd-normal-subgroups}
$O(G)=1$.
\end{lemma}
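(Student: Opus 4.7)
I plan to assume $N := O(G) \neq 1$ and derive a contradiction, in two stages.

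\emph{Stage 1: reduce to $N$ cyclic and central.} Since $|N|$ is odd, lemmas \ref{lem-abelian-subgroups-are-cyclic} and \ref{lem-Sylow-subgroups-are-cyclic-or-quaternion} imply that every Sylow subgroup of $N$ is cyclic, so the metacyclic decomposition (lemma \ref{lem-metacyclic-decomposition}) produces a characteristic cyclic subgroup $A \leq N$ with $N = A:B$. Being characteristic in $N \triangleleft G$, $A$ is normal in $G$; the conjugation map $G \to \Aut A$ lands in the abelian group $\Aut A$, so perfectness of $G$ makes it trivial and $A \leq Z(G)$. But then every element of $B\leq G$ centralizes $A$, violating part~(ii) of the decomposition unless $B=1$. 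Hence $N = A$ is cyclic and central in $G$.

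\emph{Stage 2: set up and derive the contradiction.} Choose $a \in N$ of some odd prime order $p$, a $4$-element $\phi$ (which exists by lemma \ref{lem-2-Sylow-quaternionic}), and $y$ in a Sylow $2$-subgroup containing $\phi$ with $y\phi y^{-1} = \phi^{-1}$; then $\gend{\phi,y}\iso Q_8$, the cyclic group $\gend{\phi,a}$ has order $4p$, and $\gend{\phi,y,a}\iso Q_8\times\Z/p$. The subgroup $H := N_G(\gend{\phi a})$ is proper: were $\gend{\phi a}$ normal in $G$, then $G/C_G(\gend{\phi a})\hookrightarrow\Aut(\Z/4p)$ would be abelian and perfect, hence trivial, putting $\phi a$---and so $\phi = \phi a\cdot a^{-1}$---into $Z(G)$, contradicting that the single conjugacy class of $4$-elements (lemma \ref{lem-4-elements-form-a-class}) contains both $\phi$ and $\phi^{-1}\neq\phi$. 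Counting conjugates of $\gend{\phi a}$, using that each $4$-element lies in a unique such conjugate (since the Sylow $2$- and Sylow $p$-parts of a cyclic subgroup of order $4p$ determine it) and each conjugate contains exactly two $4$-elements, gives $|H| = 2|C_G(\phi)|$, with the outer $\Z/2$ realized by $y$ (which inverts $\phi$ and fixes the central element~$a$).

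The inductive hypothesis now applies to both $H$ and $C_G(\phi)$, giving each one of the six structures of theorem \ref{thm-the-groups}. Because centralizers in quaternionic groups are cyclic, the Sylow $2$-subgroup of $C_G(\phi)$ is cyclic, forcing $C_G(\phi)$ into type~\ref{type-I}; on the other hand $H$ has quaternionic Sylow $2$ since it contains $Q_8$, so $H$ is of one of the types~\ref{type-II}--\ref{type-VI}. Tracking how the central $\Z/p = \gend{a}$ sits inside the cyclic pieces $A^*$, $B^*$, $\Theta^*$ of the candidate structure for $H$---given that $a$ is central in the ambient $G$ and that $H/C_G(\gend{\phi a})$ is the prescribed outer $\Z/2$ generated by $y$---should force an incompatibility between the internal structural constraints on $B^*$ (whose Sylows must act nontrivially on $A^*$) and the global centrality of $a$. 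This final case analysis is the main obstacle; the reductions preceding it are routine, and once the contradiction is in hand, $O(G)=1$ enables the application of the $2A_5$ recognition lemma~\ref{lem-G-is-2A5} in the subsequent arguments.
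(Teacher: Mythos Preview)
Your Stage~1 is correct and matches the paper's opening move: from the metacyclic decomposition you extract a characteristic cyclic subgroup $A\leq O(G)$, and perfectness of $G$ forces $G$ to act trivially on it, so $A$ is central. (The paper is content with a single characteristic cyclic subgroup of prime order rather than all of $N$, but your stronger conclusion is fine.)

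The gap is that Stage~2 is both unnecessary and unfinished. Once you know $G$ has a central subgroup of odd prime order~$p$, you are one sentence from the contradiction: that subgroup is a nontrivial $p$-group central in its normalizer (namely in all of $G$), so corollary~\ref{cor-cyclic-transfer} applies and every Sylow $p$-subgroup of $G$ maps faithfully to $G/G'$. This makes $G/G'$ nontrivial, contradicting perfectness. That is exactly the paper's proof.

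Your Stage~2, by contrast, sets up an intricate analysis of $H=N_G(\gend{\phi a})$ via the inductive hypothesis, but ends with ``should force an incompatibility'' and an admission that the case analysis is the main obstacle. Even if this route could be pushed through (and the bookkeeping of where a central $\Z/p$ can sit inside a type~\ref{type-II}--\ref{type-VI} structure is genuinely delicate), it is a large detour around a one-line application of Burnside transfer that you already have available.
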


\begin{proof}
  Suppose otherwise.  Since $G$'s odd Sylow subgroups are cyclic,
  lemma~\ref{lem-metacyclic-decomposition}\eqref{item-cyclic-of-coprime-orders} shows that $O(G)$
  has a characteristic cyclic subgroup of prime order.  Because this
  subgroup has abelian automorphism group, and $G$ is perfect, $G$
  acts trivially on it.  Now corollary~\ref{cor-cyclic-transfer} shows
  that $G$ has nontrivial abelianization, contradicting perfectness.
\end{proof}

\begin{lemma}
\label{lem-maximal-subgroups-lack-odd-central-subgroups}
Every maximal subgroup $M$ of $G$ has center of order~$2$.
\end{lemma}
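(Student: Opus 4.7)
The plan is to show $|Z(M)|=2$ by locating the central involution $z$ inside $Z(M)$ and then using maximality of $M$ together with $G$'s perfectness to forbid every strictly larger possibility.  First, I would verify $z\in M$: otherwise maximality forces $\gend{M,z}=G$, and since $z$ is central this makes $M$ normal of index~$2$, contradicting perfectness.  Hence $z\in Z(G)\cap M\subseteq Z(M)$, giving $|Z(M)|\geq 2$, and $Z(M)$ is cyclic by lemma~\ref{lem-abelian-subgroups-are-cyclic}.

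To eliminate odd prime divisors of $|Z(M)|$, suppose an odd prime $p$ divides it.  The unique subgroup $P\leq Z(M)$ of order $p$ is centralized by $M$, so $M\leq N_G(P)$; by maximality $N_G(P)\in\{M,G\}$.  If $N_G(P)=G$ then $P$ is a nontrivial odd normal subgroup of $G$, contradicting lemma~\ref{lem-no-odd-normal-subgroups}; if $N_G(P)=M$ then $P$ is central in its own normalizer, so corollary~\ref{cor-cyclic-transfer} forces every Sylow $p$-subgroup of $G$ to embed in $G/G'=1$, contradicting $P\neq 1$.

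To rule out $|Z(M)|\geq 4$, note that $Z(M)$ would then be a cyclic $2$-group containing a $4$-element $\phi$, with $\phi\neq\phi^{-1}$.  By lemma~\ref{lem-4-elements-form-a-class} some $g\in G$ conjugates $\phi$ to $\phi^{-1}$, so $g$ normalizes $\gend{\phi}$ without centralizing~$\phi$.  Now $M\leq C_G(\phi)\leq N_G(\gend{\phi})\leq G$, and maximality forces each intermediate group to equal $M$ or $G$.  If $C_G(\phi)=G$ then $\phi$ is central and only conjugate to itself, contradicting the conjugacy of $\phi$ with $\phi^{-1}\neq\phi$; so $C_G(\phi)=M$.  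But then $N_G(\gend{\phi})$ strictly contains $M$ (it contains $g$), forcing $N_G(\gend{\phi})=G$ and hence $\gend{\phi}$ normal of order~$4$.  Then $G/M = G/C_G(\phi)$ embeds in $\Aut(\Z/4)\iso\Z/2$, giving a subgroup of index~$2$ in $G$ and contradicting perfectness.  The trickiest ingredient is this last step: one has to manufacture an element that normalizes but does not centralize $\gend{\phi}$, and the single-class property for $4$-elements, together with $\phi\neq\phi^{-1}$, is exactly what supplies it; after that, maximality and perfectness close the argument.
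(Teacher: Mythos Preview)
Your proof is correct and follows essentially the same three-step strategy as the paper: place the central involution in $M$, rule out odd primes in $|Z(M)|$ via $O(G)=1$ and cyclic transfer, and rule out a central $\Z/4$ by producing an inverting element and invoking perfectness. The only cosmetic difference is in the last step: the paper appeals to the quaternionic Sylow $2$-subgroups to find an element inverting $Y\iso\Z/4$, whereas you extract it directly from lemma~\ref{lem-4-elements-form-a-class} via $\phi\sim\phi^{-1}$, which is arguably a bit cleaner.
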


\begin{proof}
First, $M$ contains $G$'s central involution.  Otherwise, adjoining it
to $M$ would yield $G$ by maximality.  Then $G$ would have an
index~$2$ subgroup, contrary to perfectness.  
Next we show that $M$ has no central subgroup $Y$ of
order~$4$; suppose it did. By the conjugacy of $\Z/4$'s in $G$, and the fact that
$G$'s Sylow $2$-subgroups are quaternionic, $N(Y)$
contains an element inverting $Y$.  So $N(Y)$ is strictly larger than
$M$, hence coincides with $G$, and the map $G\to\Aut(Y)\iso\Z/2$ is
nontrivial, contrary to perfectness.

Finally we show that $M$ has no central subgroup $Y$ of odd prime
order${}>1$.  
The previous lemma shows that $N(Y)$ is strictly smaller than
$G$.  Since $M$ normalizes $Y$ and is maximal, it is $Y$'s full
normalizer.  
Since $Y$ is central in $M$, we see that $N(Y)$ acts
trivially on~$Y$.  Now corollary~\ref{cor-cyclic-transfer} shows that
$G/G'$ is 
nontrivial, contradicting perfectness.
\end{proof}

The next lemma is where our development diverges from Wolf's.

\begin{lemma}[Maximal subgroups]
\label{lem-maximal-subgroup-structures-perfect-case}
Every maximal subgroup $M$ of $G$ has one of the following structures,
with $O(M)$ a cyclic group.
\renewcommand\theenumi{{\rm\Roman{enumi}}}
\begin{enumerate}
\item
$O(M):\hbox{\rm(cyclic 2-group of order${}>2$)}$
\item
$O(M):\hbox{\rm(quaternionic group)}$
\item
$2A_4$
\item
\leavevmode \hbox{$\bigl(O(M).2A_4\bigr)\cdot2$} where the elements of
$M$ outside  $O(M).2A_4$ act on $O(M)$ by inversion and
on the quotient $2A_4$ by outer automorphisms.
\item
$2A_5$
\item
\leavevmode \leavevmode \hbox{$\bigl(O(M)\times2A_5\bigr)\cdot2$}
where 
the elements of
$M$ outside  $O(M)\times2A_5$ act on $O(M)$ by inversion and on $2A_5$ by 
outer automorphisms.
\end{enumerate}
The $G$-normalizer of any nontrivial subgroup of $O(M)$ is~$M$.
If $M_1$ and $M_2$ are non-conjugate maximal subgroups, then
$O(M_1)$ and $O(M_2)$ have coprime orders.
\end{lemma}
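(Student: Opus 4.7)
Since $M$ is a proper subgroup of the perfect group $G$, the inductive hypothesis applies and $M$ already has one of the six structures of theorem~\ref{thm-the-groups}, with cyclic odd coprime subgroups $A$ and $B$ together with auxiliary pieces ($T$, $Q_8$, $\Theta$, $2A_5$, $\Phi$) according to the type. My strategy is to refine the parameters of this structure using $Z(M)=\Z/2$ (lemma~\ref{lem-maximal-subgroups-lack-odd-central-subgroups}), $O(G)=1$ (lemma~\ref{lem-no-odd-normal-subgroups}), and the maximality of $M$, until $M$ matches one of the six structures of the present lemma, at which point $O(M)$ is manifestly cyclic.

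First I would impose $Z(M)=\Z/2$. A direct computation in each theorem-type expresses $Z(M)$ as the product of the $B$-fixed subgroup of $A$, the kernel of $B\times T$ (or $\Theta\times B$) acting on $A$ and on $Q_8$ when present, $G$'s central involution $z$, and, in types~\ref{type-IV} and~\ref{type-VI}, the $\Phi$-fixed subgroup of $A$. Forcing $|Z(M)|=2$ kills these fixed-point pieces, shows $B$ acts faithfully and fixed-point-freely on $A$, and in types~\ref{type-III}--\ref{type-IV} forces $|\Theta|=3$. For type~\ref{type-I}, the cases $T=1$ and $|T|=2$ are both excluded: $T=1$ makes $M$ odd (contradicting $z\in M$), and $|T|=2$ makes $M$ abelian, so $M=\gend{z}=Z(G)$, which is normal in $G$ and hence not maximal in the perfect $G\neq 1$. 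In types~\ref{type-III} and~\ref{type-V} (no outer extension), once $B=1$ is forced the surviving condition $A^B=1$ becomes $A=1$, so $M$ reduces to $2A_4$ or $2A_5$; in types~\ref{type-IV} and~\ref{type-VI} the outer $\Z/4$ inverts $A$, making $A^\Phi=1$ automatic and leaving $A$ arbitrary.

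The crucial step is to force $B=1$. The key observation is that $B$ is central in $M/A$ in every type: $B$ commutes with $\Theta$, $T$, $\Phi$ by the direct-product form of the complement, and with $Q_8$ and $2A_5$ because the hypotheses on $|B|$ (prime to $3$, respectively prime to $15$) forbid nontrivial actions on those groups. Consequently $AQ\triangleleft M$ for every Sylow $q$-subgroup $Q\leq B$, and maximality of $M$ together with $O(G)=1$ gives $N_G(AQ)=M$. Applying the persistence part of the metacyclic decomposition (lemma~\ref{lem-metacyclic-decomposition}(\ref{item-decomposition-persists})) to $AQ\triangleleft M$ then produces enough rigidity on how $Q$ sits inside $M$ to contradict the hypothesis that the Sylow subgroups of $B$ act nontrivially on $A$, thereby forcing $B=1$.

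With $B=1$ the structure of $M$ collapses to exactly one of the six cases of the lemma, and $O(M)$ is cyclic in each: it equals $A$ in types~I, II, IV, VI and is trivial in types~III, V. The normalizer claim then follows immediately: for any nontrivial $H\leq O(M)$, $H$ is characteristic in the cyclic group $O(M)$ and hence normal in $M$, so $M\leq N_G(H)$; since $H$ is odd and nontrivial while $O(G)=1$, $H$ is not normal in $G$, so $N_G(H)\neq G$, and maximality gives $N_G(H)=M$. The main obstacle is the $B=1$ step: the other refinements are routine semidirect-product calculations, but ruling out a genuine non-cyclic odd normal core requires a subtle interaction between maximality of $M$ and the metacyclic structure of its odd part.
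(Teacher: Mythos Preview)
Your outline has the right shape, but the step you call ``the main obstacle'' is actually immediate, and your type~\ref{type-IV} analysis contains an error.

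For $B=1$: you have already computed that the kernel of $B$'s action on $A$ lies in $Z(M)$ (since $B$ is central in $M/A$), so $|Z(M)|=2$ forces $B$ to act faithfully on~$A$. But the structure in theorem~\ref{thm-the-groups} also stipulates that every \emph{prime-order} element of $B$ acts \emph{trivially} on~$A$. These two statements together give $B=1$ in one line: any element of prime order in $B$ would lie in the kernel, which is trivial. This is exactly the paper's argument. Your proposed route via lemma~\ref{lem-metacyclic-decomposition}\eqref{item-decomposition-persists} is vague---you never say what contradiction the persistence actually produces---and indeed applying that lemma to $AQ\triangleleft M$ only recovers a semidirect decomposition you already possess, with no evident contradiction in sight.

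For type~\ref{type-IV}: your claim that $|Z(M)|=2$ forces $|\Theta|=3$ is wrong. The $4$-elements of $\Phi$ invert $\Theta$ (by the definition of type~\ref{type-IV}), so no nontrivial element of $\Theta$ can be central in $M$, and the center constraint places no bound on~$|\Theta|$. Consequently $O(M)$ is not just $A$; it is generated by $A$ together with the index-$3$ subgroup of~$\Theta$ (which acts trivially on $Q_8$ and, by remark~\ref{remark-implied-relations}, commutes with~$A$). The quotient by $O(M)$ is then $2A_4\cdot2$, and you must still verify that the outer elements invert all of $O(M)$: inversion on the $\Theta$-part is part of the definition of type~\ref{type-IV}, and inversion on $A$ comes from the center argument you already gave.
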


\begin{proof}
By induction, $M$ has one of the structures in theorem~\ref{thm-the-groups}.  In
light of the previous lemma, we keep only those with center of
order~$2$.  Here are the details.  In every case, the prime order
elements of $B$ are central in $M$, so they cannot exist, so $B=1$.
For types~\ref{type-I}--\ref{type-II} this leaves $O(M)=A$ and establishes our
claimed structure for $M$.  
For type~\ref{type-I} we must also show that the cyclic $2$-group, call it $T$,
has order${}>2$. Otherwise, $A$ is central in $M$,
hence trivial, so $T$ is all of $M$ and has order~$2$.  That is, the
center of $G$ is a maximal subgroup of $G$, which is a contradiction
because no group can have this property.

For type~\ref{type-V} we have shown $M\sim2A_5\times A$, so $A$ is
central, hence trivial.  For type~\ref{type-III} we know $M\sim
(Q_8\times A):\Theta$, with $\Theta$'s $3$-elements centralizing~$A$.
It follows that $|\Theta|=3$, because otherwise $\Theta$'s
$3$-elements would also centralize $Q_8$, hence be central in $M$.
From $|\Theta|=3$ it follows that $A$ is central in $M$, hence
trivial.  So $M\sim Q_8:3$ with the $\Z/3$ acting nontrivially on
$Q_8$.  Since $\Aut(Q_8)\iso S_4$ has a unique class of $3$-elements, 
$M$ is determined up to isomorphism, namely $M\iso2A_4$.

For type~\ref{type-VI} we know $M\sim(2A_5\times
A)\cdot2$ and $O(M)=A$. Also, $A$ decomposes as the direct sum of its
subgroup inverted by the nontrivial element $t$ of $M/(2A_5\times
A)\iso\Z/2$ and its subgroup fixed pointwise by~$t$.  The latter
subgroup is central in $M$, hence trivial, so $t$ inverts $A$ as
claimed.  Also, $t$'s image in $\Out 2A_5$ is nontrivial by 
the definition of type~\ref{type-VI} groups. 

Finally, for type~\ref{type-IV} we have $M\sim\bigl((Q_8\times
A):\Theta\bigr)\cdot2$.   By remark~\ref{remark-implied-relations}, $\Theta$ and
$A$ commute.  So $O(M)$ is cyclic and generated by $A$ and the index~$3$ subgroup
of $\Theta$, leaving $M\sim(O(M).2A_4)\cdot2$.  By the argument
for type~\ref{type-VI}, the elements of $M$
mapping nontrivially to $\Z/2$ must invert $O(M)$ and act on $Q_8$ by
outer automorphisms. 

We have shown in each case that $O(M)$ is cyclic. So its subgroups are
characteristic in $O(M)$, hence normal in $M$.  By
lemma~\ref{lem-no-odd-normal-subgroups} and maximality, $M$ is the
full normalizer of any nontrivial subgroup of $O(M)$.
For the last claim of the theorem, suppose a prime $p$
divides the orders of $O(M_1)$ and $O(M_2)$.
We have just shown that $M_1$ is the
normalizer of a cyclic group of order~$p$, and that $M_2$ is the normalizer of
another.  These cyclic groups are $G$-conjugate, so  $M_1$
and~$M_2$ are also.
\end{proof}

\begin{lemma}[Done if $Q_{16}\not\leq G$]
  \label{lem-done-if-Q-8}
  Suppose the Sylow $2$-subgroups of $G$ have order~$8$.  Then
  $G\iso2A_5$. 
\end{lemma}

\begin{proof}
  Suppose $M$ is a maximal subgroup of $G$.  It cannot have type
  \ref{type-IV} or~\ref{type-VI}, because these contain
  copies of $Q_{16}$.  If $M$ has type \ref{type-III} or~\ref{type-V}
  then $M\iso2A_4$ or~$2A_5$ by lemma~\ref{lem-maximal-subgroup-structures-perfect-case}.  We claim that in the
  remaining cases, $M$ is binary dihedral.

  If $M$ has type~\ref{type-I} then it has structure $O(M):T$ where
  $T$ is a cyclic group of order${}\geq4$.  Since this is the largest
  a cyclic $2$-group in $G$ can be, $T$ has order exactly~$4$.  A
  generator for it must negate $O(M)$, or else $M$'s center would have
  order${}>2$.  We have shown that $M$ is binary dihedral.

  Now suppose $M$ has type~\ref{type-II}, hence structure $O(M):Q$
  where $Q\iso Q_8$.  If $O(M)=1$ then $M\iso Q_8$, which is binary
  dihedral as claimed.  So suppose $O(M)\neq1$ and let $P$ be any
  Sylow subgroup of $O(M)$.  Since $\Aut P$ is cyclic, and
  $Q/Q'\iso2\times2$, some $4$-element of $Q$ acts trivially on $P$.
  Since all $4$-elements are conjugate, the centralizer of any one of
  them has order divisible by $|P|$.  Now fix a particular
  $4$-element~$\phi$.  Letting $P$ vary over all Sylow subgroups of
  $O(M)$ shows that $C(\phi)$ has order divisible by $|O(M)|$.  The
  only maximal subgroup of $G$ that could contain a cyclic group of
  order  $4\cdot|O(M)|$ is~$M$, up to
  conjugacy.
  So after conjugation we may suppose that
  $O(M)\leq C(\phi)\leq M$.  We have shown that some $4$-element
  $\phi$ of $Q$ centralizes $O(M)$.  So $Q$ acts on $O(M)$ via a
  quotient group of order${}\leq2$.  This quotient must be $\Z/2$,
  acting by negation, because otherwise $M$ would have center larger
  than $\Z/2$.   It follows that $M$ is binary dihedral.

  We have shown that every maximal subgroup of $M$ is binary dihedral,
  binary tetrahedral or binary icosahedral.  
  By examining normalizers in these groups, one checks that every
  noncentral cyclic subgroup of $G$ has binary dihedral normalizer.  So
  $G\iso2A_5$ by lemma~\ref{lem-G-is-2A5}.
\end{proof}

To prove the perfect case of theorems \ref{thm-the-groups}
and~\ref{thm-uniqueness-of-structure}, it now suffices to rule out the case
$Q_{16}\leq G$.  We devote the rest of this section to this.

\begin{lemma}[$\Z/4$ normalizers]
  \label{lem-Z-4-normalizers}
  For any subgroup $\Phi\iso\Z/4$ of $G$, $N(\Phi)$ has structure
  $(\hbox{\rm odd group}).(\hbox{\rm Sylow $2$-subgroup of $G$})$.
\end{lemma}

\begin{proof}
  We claim first that $N(\Phi)$ has structure $(\hbox{\rm odd
    group}).(\hbox{\rm $2$-group})$.  Choosing a maximal subgroup $M$
  containing $N(\Phi)$, it suffices to show that $N_M(\Phi)$ has this
  structure.  To prove this one considers each possible structure for
  $M$ listed in lemma~\ref{lem-maximal-subgroup-structures-perfect-case}, and each subgroup $\Z/4$ of it.  To
  finish the proof we use the 
  facts that some $\Z/4$ is normal in some  Sylow
  $2$-subgroup, and that 
  all $\Z/4$'s are
  conjugate.
\end{proof}

\begin{lemma}[$Q_8$ normalizers]
  \label{lem-Q8s-if-a-Q16-exists}
  Suppose $G$ contains a copy of $Q_{16}$.  Choose $Q\leq G$
  isomorphic to $Q_8$ and write $N$ for its normalizer.  Then
  \begin{enumerate}
  \item
    \label{item-Q-8-normalizer} $N\sim\bigl(O(N).2A_4\bigr)\cdot2$.
  \item
    \label{item-condition-to-split-off-2A4} $Q$ lies in a group $2A_4$
    if and only if $3\nmid|O(N)|$.
  \item
    \label{item-more-than-one-Q-8}
    $G$ has more than one conjugacy class of $Q_8$ subgroups.
  \item
    \label{item-2A4-exists} $G$ contains a subgroup $2A_4$.
  \end{enumerate}
\end{lemma}

\begin{proof}
  \eqref{item-Q-8-normalizer}
  We begin by exhibiting some elements of~$N$.  Choose any
  subgroup $\Phi\iso\Z/4$ of $Q$.  There exists a Sylow $2$-subgroup of
  $N(\Phi)$ that contains $Q$.
  So $Q$ lies in some $Q_{16}$ that normalizes~$\Phi$.  This $Q_{16}$ contains an element that inverts $\Phi$ and
  exchanges the other two $\Z/4$ subgroups of $Q$.  We have shown that
  $N$ contains an element that normalizes our chosen $\Z/4$ and
  exchanges the other two $\Z/4$'s in~$Q$.  It follows that $N$ acts
  by $S_3$ on the three $\Z/4$ subgroups of~$Q$.
  
  Now we fix a maximal subgroup $M$ of $G$ that contains~$N$.  The
  property of~$N$ just established forces $M$ to have type
  \ref{type-IV} or~\ref{type-VI}.  That is,
  $M\sim\bigl(O(M).(\hbox{$2A_4$ or $2A_5$})\bigr)\cdot2$ with $Q$
  lying in the index~$2$ subgroup~$M'$.  In either case, $N$ coincides
  with $N_M(Q)$, which has the
  stated structure.  Also,
  $O(N)=O(M)$, which we will use later in the proof.

  \eqref{item-condition-to-split-off-2A4} ``If'' follows from the
  existence of Sylow $3$-subgroup, isomorphic to $\Z/3$, in
  $N$.  For ``only if'', suppose $3$ divides $|O(N)|$.  Then $N$
  cannot contains any $2A_4$, because all its
  $3$-elements are central in $N$.

  \eqref{item-more-than-one-Q-8} Fix $\Phi\leq Q$ isomorphic to $\Z/4$.  By lemma~\ref{lem-Z-4-normalizers}
  there is a subgroup $Q^*\iso Q_8$ of $N(\Phi)$ that is not
  $N(\Phi)$-conjugate to $Q$.  We claim that $Q^*$ is not
  $G$-conjugate to $Q$ either; suppose to the contrary that some $g\in
  G$ conjugates $Q^*$ to $Q$.  By $\Phi\leq Q^*$ we see that $g$ sends
  $\Phi$ into $Q$.  By replacing $g$ by its composition with an
  element of $N$ that sends $\Phi^g$ back to $\Phi$, we may suppose
  without loss that $g$ normalizes~$\Phi$.  That is, $Q$ and $Q^*$ are
  conjugate in $N(\Phi)$, which is a contradiction.
  
  \eqref{item-2A4-exists} Supposing that $G$ contains no $2A_4$, we will show that
  every subgroup $Q^*\iso Q_8$ of $G$ is conjugate to $Q$, which
  contradicts \eqref{item-more-than-one-Q-8}.  Applying the argument for \eqref{item-Q-8-normalizer} to $Q^*$,
  we write $N^*$ for its normalizer and $M^*$ for a maximal subgroup
  containing~$N^*$.  By \eqref{item-condition-to-split-off-2A4} and the nonexistence of $2A_4$'s, $3$
  divides the orders of $O(N)$ and $O(N^*)$.  These groups are the same as
  $O(M)$ and $O(M^*)$.  Since $|O(M)|$ and $|O(M^*)|$ have a common
  factor,
  the last part of lemma~\ref{lem-maximal-subgroup-structures-perfect-case} shows that $M$ and $M^*$ are
  conjugate.
  So their index two subgroups $M'$ and ${M^*}'$ are
  conjugate, both of which   have structure
  $(\hbox{odd group}).(\hbox{$2A_4$ or $2A_5$})$.
  Since $Q$
  resp.\ $Q^*$ is a Sylow $2$-subgroup of $M'$ resp.\ ${M^*}'$, it
  follows 
  that $Q$ and $Q^*$ are conjugate.
\end{proof}

\begin{lemma}[Free actions of binary dihedral groups]
  \label{lem-free-actions-of-binary-dihedral-groups}
  Suppose $H$ is a binary dihedral group, $U$ is an irreducible
  fixed-point-free real representation of~$H$, and $\a$ is the natural
  map $\R[H]\to\End(U)$.  
  Then
  $\a(\R[H])\iso\H$.  Furthermore, if $J$ is
  any binary dihedral subgroup of $H$, then $\a(\R[J])=\a(\R[H])$.
\end{lemma}

\begin{proof}
  It is easy to see that the lemma holds for the representations
  \eqref{eq-binary-dihedral-representation}, using their description in terms of $\H$.  So it will
  suffice to show that $U$ is one of them.  Let $C$ be an index~$2$
  cyclic subgroup of $H$.  Under~$C$, $U$ decomposes as a direct sum
  of $2$-dimensional spaces, on each of which $C$ acts faithfully by
  rotations.  Fix one, say~$T$, and consider the induced
  representation $\Ind_C^H(T)$, of dimension~$4$.  Its canonical image in $U$ is
  $H$-invariant, hence all of~$U$.  This image is larger than $2$-dimensional,
  because a binary dihedral group has no $2$-dimensional free real
  representations. (Every finite subgroup of ${\rm O}(2)$ is cyclic or dihedral.)
  Therefore $V\iso\Ind_C^H(T)$.  And the
  representations in \eqref{eq-binary-dihedral-representation} are exactly the $H$-representations of
  this form.
\end{proof}

\begin{lemma}[Free actions of the binary tetrahedral group]
\label{lem-free-actions-of-binary-tetrahedral-group}
Assume $A\iso2A_4$ acts freely on a real vector space~$V$, and write
$Q$ for the copy of $Q_8$ in~$A$.  Then the image of $\R[A]$ in
$\End(V)$ is isomorphic to $\H$ and equal to the image of $\R[Q]$.  In
particular, every $Q$-invariant subspace is also $A$-invariant.
\end{lemma}

\begin{proof}
  Fix an identification of $A$ with
  \begin{equation}
    \label{eq-concrete-model-of-2A4}
    2A_4=\bigl\{\pm1,\pm i,\pm j,\pm k, (\pm1\pm i\pm j\pm
    k)/2\bigr\}\sset\H^*
  \end{equation}
  We claim that $A$ has a unique irreducible free real
  representation.  It follows that $V$ is a
  direct sum of copies of $A$'s left-multiplication action on $\H$, which proves the lemma.

  For the claim, fix an irreducible free $A$-module $U$.  Choose a
  $Q$-sub\-mod\-ule $T$ which is $Q$-irreducible.  In the proof of
  lemma~\ref{lem-free-actions-of-binary-dihedral-groups} we showed that \eqref{eq-binary-dihedral-representation} accounts for all irreducible
  free actions of binary dihedral groups. For $Q_8$, there was only
  one.  So we may identify $T$ with the real vector space underlying
  $\H$, with $Q=\{\pm1,\pm i,\pm j,\pm k\}\sset A$ acting by left
  multiplication.  The image of the natural map $\Ind_Q^A(T)\to U$ is
  $A$-invariant, hence all of~$U$.  By definition, $\Ind_Q^A(T)$ is
  the real vector space underlying $\H^3$, with $i\in Q$ acting by
  $(x,y,z)\mapsto(ix,jy,kz)$, similarly with $i,j,k$ cyclically
  permuted, and $\theta:=(-1+i+j+k)/2\in A$ acting by
  $(x,y,z)\mapsto(z,x,y)$.  Obviously the kernel of $\Ind_Q^A(T)\to U$
  contains the fixed points of the nonidentity elements of~$A$.  For
  $\theta$ and $i\circ\theta$ these are $\{(x,x,x)\}$ and
  $\{(x,jx,-ix)\}$, which together span an
  $8$-dimensional subspace, hence the whole kernel.  This proves the
  uniqueness of~$U$: it is the quotient of $\Ind_Q^A(T)$ by the
  subspace generated by the fixed points of the nonidentity elements
  of~$A$.  
\end{proof}

\begin{lemma}[The final contradiction]
\label{lem-trick-for-large-Sylow-2-subgroups}
$G$ contains no subgroup $Q_{16}$.
\end{lemma}

\begin{proof}
  Suppose otherwise, and fix an irreducible fixed-point free real
  representation $V$ of $G$.  We write $\a$ for the natural map
  $\R[G]\to\End V$.

  By lemma~\ref{lem-Q8s-if-a-Q16-exists}\eqref{item-2A4-exists}, $G$ has a subgroup $A^*\iso2A_4$.  We write
  $Q^*$ for its $Q_8$ subgroup.  We fix a $Q_{16}$-subgroup of $G$
  that contains $Q^*$.  This is the only group isomorphic to $Q_{16}$ we
  will consider, so we just write $Q_{16}$ for it.  Write $Q$ for the
  other $Q_8$ subgroup of $Q_{16}$.  We will distinguish two cases,
  according to whether there exists some $A\iso 2A_4$ containing
  $Q$.  In each case we will prove $\a(\R[G])\iso\H$, which implies that $G$ is
  isomorphic to a subgroup of $\H^*$.  We assume this for the moment.

  A well-known property of $\H^*$ is that every non-central cyclic
  subgroup has $\H^*$-normalizer isomorphic to the ``continuous binary
  dihedral group'', meaning the nonsplit extension $(\R/\Z)\cdot2$.
  It follows
  that the $G$-normalizer of any non-central cyclic subgroup is 
  cyclic or binary dihedral.  The cyclic case is ruled out by
  corollary~\ref{cor-cyclic-transfer} and the perfectness of~$G$.  So lemma~\ref{lem-G-is-2A5}  applies,
  proving $G\iso2A_5$.

  It remains to prove $\a(\R[G])\iso\H$.  First suppose that $A$
  exists.  $G$~is generated by $A$ and $A^*$, because no maximal
  subgroup contains two copies of $2A_4$, whose $Q_8$ subgroups
  generate a copy of $Q_{16}$.  Now fix a irreducible
  $Q_{16}$-submodule $U$ of $V$.  It is obviously $Q$- and
  $Q^*$-invariant, hence $A$- and $A^*$-invariant by
  lemma~\ref{lem-free-actions-of-binary-tetrahedral-group}.  So it is
  $G$-invariant by $\gend{A,A^*}=G$, and the $G$-irreducibility of $V$
  implies $U=V$.  Lemmas
  \ref{lem-free-actions-of-binary-dihedral-groups}
  and~\ref{lem-free-actions-of-binary-tetrahedral-group} also give us
  the equalities
  \begin{equation}
    \label{eq-the-trick-first-version}
    \a(\R[A])=\a(\R[Q])=\a(\R[Q_{16}])=\a(\R[Q^*])=\a(\R[A^*])
  \end{equation}
  and say that this subalgebra of $\End(U)$ is isomorphic to~$\H$.  
  Since $A$ and $A^*$ generate~$G$, $\a(\R[G])$ also equals this copy of $\H$, finishing the proof in the case that $A$ exists.

  Now suppose $Q$ lies in no copy of~$2A_4$, and set $N=N_G(Q)$ and
  $\Phi=Q\cap Q^*\iso\Z/4$.  By \eqref{item-Q-8-normalizer}
  and~\eqref{item-condition-to-split-off-2A4} of
  lemma~\ref{lem-Q8s-if-a-Q16-exists} we have $N\sim(O(N).2A_4)\cdot2$
  with $|O(N)|$ divisible by~$3$.  By construction $Q^*$ normalizes
  $Q$, and by $Q^*\neq Q$ we see that $Q^*$ contains a $4$-element
  $q^*$ which lies in $N$ but outside its index~$2$ subgroup
  $O(N).2A_4$.  So $q^*$ inverts $O(N)$.  Of course, $q^*$ also inverts
  $\Phi$.  And $O(N)$ commutes with $Q$ (hence $\Phi$) by the
  known structure of~$N$.  So $H=\gend{q^*,O(N),\Phi}=\gend{Q^*,O(N)}$
  is binary dihedral.  The rest of the argument is similar to the
  previous case.

  Namely, we have $G=\gend{H,A^*}$ because
  no maximal subgroup $M$ of $G$ contains a copy of $Q_8$ which
  normalizes one copy of $\Z/3$ in $M$ and is normalized by a
  different copy of $\Z/3$ in $M$.
  Now we choose an irreducible $H$-submodule $U$ of $V$.  It is
  trivially $Q^*$-invariant.  Then
  lemma~\ref{lem-free-actions-of-binary-tetrahedral-group} shows that
  $U$ is $A^*$-invariant, hence $G$-invariant,
  hence all of~$V$.  Lemmas
  \ref{lem-free-actions-of-binary-dihedral-groups}--\ref{lem-free-actions-of-binary-tetrahedral-group}
  give the equalities
  \begin{equation}
  \label{eq-the-trick-second-version}
  \a(\R[H])=\a(\R[Q^*])=\a(\R[A^*])
  \end{equation}
  and say that this subalgebra of $\End(U)$ is isomorphic to~$\H$.
  By $G=\gend{H,A^*}$, this  subalgebra is also $\a(\R[G])$, finishing the proof.
\end{proof}

\begin{remark}[Frobenius complements]
  \label{remark-Frobenius-complements}
  As mentioned in the introduction, our arguments adapt easily to
  classify the Frobenius complements, which are the same groups.
  Supposing that $G$ acts freely on a vector space over a finite field
  $\F_q$, where $q$ is necessarily odd, we extend scalars to suppose
  without loss that all elements of $G$ are diagonalizable.  Then
  lemmas \ref{lem-free-actions-of-binary-dihedral-groups}--\ref{lem-trick-for-large-Sylow-2-subgroups} and their proofs still apply with $\R$
  replaced by $\F_q$, $\H$ (the algebra) by $M_2\F_q$, $\H$ (the
  module) by $\F_q^2$, and $\H^*$ by $\SL_2\F_q$.  (Although the proof
  of lemma~\ref{lem-free-actions-of-binary-tetrahedral-group} looks $\H$-specific, one defines the Hurwitz
  integers as the $\Z$-span of the quaternions
  \eqref{eq-concrete-model-of-2A4}, and tensoring with $\F_q$ gives
  $M_2\F_q$.)
\end{remark}

\section{The imperfect case}
\label{sec-imperfect}

\noindent
In this section we will complete the proofs of
theorems \ref{thm-the-groups} and~\ref{thm-uniqueness-of-structure}.  We suppose throughout that $G$ is an
imperfect finite group that acts freely and isometrically on a sphere of some dimension,
and that every proper subgroup has one of the structures listed in the
statement of theorem~\ref{thm-the-groups}.  We will prove that $G$ also has one of
these structures, in fact a unique one in the sense of
theorem~\ref{thm-uniqueness-of-structure}.  We will prove theorems
\ref{thm-the-groups} and~\ref{thm-uniqueness-of-structure} in three
special cases (which don't actually use imperfectness), and then argue that these
cases are enough.

\begin{lemma}
\label{lem-if-G-mod-odd-subgroup-is-a-2-group}
Suppose $G/O(G)$ is a $2$-group.  Then $G$ has type
\ref{type-I} or~\ref{type-II} from theorem~\ref{thm-the-groups}, for 
a
unique-up-to-conjugation triple of subgroups $(A,B,\discretionary{}{}{}T)$.
\end{lemma}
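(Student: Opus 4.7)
The plan is to apply the metacyclic decomposition (lemma~\ref{lem-metacyclic-decomposition}) to $H:=O(G)$ and then use its persistence property to lift the decomposition to $G$. Since $|H|$ is odd, every Sylow subgroup of $H$ is cyclic by lemmas~\ref{lem-abelian-subgroups-are-cyclic} and~\ref{lem-Sylow-subgroups-are-cyclic-or-quaternion}. Applying lemma~\ref{lem-metacyclic-decomposition} thus yields a characteristic cyclic subgroup $A\leq H$ and a cyclic complement $B$, with $|A|$ and $|B|$ coprime, such that every nontrivial Sylow subgroup of $B$ acts nontrivially on~$A$. Because $G/H$ is a $2$-group, $|G/H|$ is coprime to $|H|$, so part~\eqref{item-decomposition-persists} of the same lemma supplies a complement $T$ to $H$ in $G$ with $[B,T]=1$, giving the desired decomposition $G=A:(B\times T)$. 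Lemma~\ref{lem-Sylow-subgroups-are-cyclic-or-quaternion} forces $T$, a Sylow $2$-subgroup of $G$, to be cyclic (placing $G$ in type~\ref{type-I}) or quaternionic (placing $G$ in type~\ref{type-II}).

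Next I would verify the two side conditions in theorem~\ref{thm-the-groups}. For type~\ref{type-I} with $T\neq1$, the involution in $T$ is the unique involution of $G$ by lemma~\ref{lem-unique-involution}, hence central, hence acts trivially on~$A$. To show every prime-order element $b\in B$ centralizes $A$, fix a prime $p$ dividing $|A|$, let $A_p$ be the Sylow $p$-subgroup of $A$, and pick $a\in A_p$ of order~$p$. Since $q:=|b|$ is a prime dividing $|B|$, it differs from $p$, so $\gend{a,b}$ has order $pq$ and is cyclic by lemma~\ref{lem-abelian-subgroups-are-cyclic}; in particular $b$ fixes $a$. Then $b$ acts on $A_p$ through the kernel of the reduction $\Aut(A_p)\to\Aut(\Z/p)$, which is a $p$-group, but $b$ has $p'$-order, so $b$ acts trivially on~$A_p$. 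Running over all $p\mid|A|$ gives $[b,A]=1$.

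For uniqueness, $A$ is characterized intrinsically as the subgroup of $O(G)$ described in lemma~\ref{lem-metacyclic-decomposition}, so any other triple $(A^*,B^*,T^*)$ of the same shape has $A^*=A$. The Schur--Zassenhaus theorem (equivalently \eqref{item-complements-exist-and-are-conjugate} of lemma~\ref{lem-metacyclic-decomposition}) then conjugates $B^*$ to $B$ by an element of $A$, after which \eqref{item-decomposition-persists} conjugates the transported $T^*$ to $T$ by an element of $C_A(B)$; composing yields a single element of $G$ that carries $(A^*,B^*,T^*)$ to $(A,B,T)$. The only nontrivial step in this outline is the $\Aut$-lifting argument used to promote ``$b$ fixes one element of order $p$'' to ``$b$ centralizes all of $A_p$''; the rest is bookkeeping on top of lemma~\ref{lem-metacyclic-decomposition}.
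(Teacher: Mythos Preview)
Your proof is correct and follows essentially the same route as the paper's: apply lemma~\ref{lem-metacyclic-decomposition} to $O(G)$, use its persistence property~\eqref{item-decomposition-persists} to produce $T$ commuting with $B$, invoke lemma~\ref{lem-Sylow-subgroups-are-cyclic-or-quaternion} for the shape of $T$, and derive uniqueness from the intrinsic characterization of $A$ together with the conjugacy statements in~\eqref{item-complements-exist-and-are-conjugate} and~\eqref{item-decomposition-persists}. The only cosmetic difference is that the paper handles the ``prime-order elements of $B$ and the involution of $T$ centralize $A$'' conditions in one stroke (an automorphism of a cyclic group, of order prime to $|A|$, that fixes every prime-order subgroup is trivial), whereas you treat the involution via lemma~\ref{lem-unique-involution} and the elements of $B$ via your $\Aut(A_p)\to\Aut(\Z/p)$ argument; these are the same observation phrased two ways.
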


\begin{proof}
We will construct $A$, $B$ and $T$ such that $G\sim A:(B\times T)$ as
in theorem~\ref{thm-the-groups}, along the way observing
that the construction is essentially unique.  Obviously we must choose
$A$ and $B$ such that $O(G)=A:B$.  Applying
lemma~\ref{lem-metacyclic-decomposition} to $O(G)$ proves
the following.  The required coprimality of $|A|$ and $|B|$, together
with the requirement that each nontrivial Sylow subgroup of $B$ acts
nontrivially on $A$, can be satisfied in a unique way.  That is, $A$
is uniquely determined, and $B$ is determined up to conjugacy in
$O(G)$.  Using the coprimality of $|O(G)|$ and $|G/O(G)|\iso T$, part
\eqref{item-decomposition-persists} of lemma~\ref{lem-metacyclic-decomposition} shows
that the decomposition $O(G)=A:B$ extends to a decomposition
$G=A:(B\times T)$ where $T$ is determined uniquely up to conjugacy in
$N_G(B)$.

This finishes the proof of uniqueness; it remains to check a few
assertions of theorem~\ref{thm-the-groups}.
Lemma~\ref{lem-Sylow-subgroups-are-cyclic-or-quaternion} says that $T$
is cyclic or quaternionic.
Lemma~\ref{lem-abelian-subgroups-are-cyclic} assures us that every
prime-order element of $B$ or $T$ acts trivially on every prime-order
subgroup of $A$.  An automorphism of the cyclic group $A$, of order
prime to $|A|$ and
acting trivially on every subgroup of prime order, must act trivially on
all of~$A$.  So the prime-order elements of $B$ and $T$ centralize~$A$.
\end{proof}

\begin{lemma}
\label{lem-if-G-has-normal-binary-icosahedral}
Suppose $G$ contains a normal subgroup $2A_5$.  Then $G$ has
type \ref{type-V} or~\ref{type-VI} from theorem~\ref{thm-the-groups}, for a
unique-up-to-conjugation tuple of subgroups $(A,B,2A_5)$ or
$(A,B,2A_5,\Phi)$.
\end{lemma}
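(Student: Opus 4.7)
The plan is to analyze the centralizer $C = C_G(N)$ of the given normal $N = 2A_5$. Since $Z(N) = \langle z\rangle$ is characteristic in $N$, hence normal in $G$ of order two, the involution $z$ is central in $G$ and is the unique involution of $G$ by lemma~\ref{lem-unique-involution}. We have $N\cap C = \langle z\rangle$, and $G/C$ embeds into $\Aut(N)\iso S_5$ with image containing $\Inn(N)\iso A_5$, so $[G:NC]\in\{1,2\}$, corresponding respectively to types~\ref{type-V} and~\ref{type-VI}. My first step is to show $C = \langle z\rangle\times H$ with $H$ of odd order coprime to~$15$: if $\phi\in C$ had order $4$ then $\phi^2 = z$, and together with $Q_8\leq N$ it would generate the central product $\Z/4*Q_8$, a group of order $16$ containing seven involutions, contradicting lemma~\ref{lem-unique-involution}. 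Hence the Sylow~$2$-subgroup of $C$ is $\langle z\rangle$; Schur--Zassenhaus together with the centrality of $z$ splits off $H$ of odd order; and lemma~\ref{lem-abelian-subgroups-are-cyclic} rules out $3$ and $5$ as divisors of $|H|$, since such a prime divisor would pair with a commuting $\Z/p\leq N$ to give a non-cyclic abelian subgroup. Lemma~\ref{lem-metacyclic-decomposition} then writes $H = A:B$ with $A$ uniquely determined and $B$ a complement unique up to $A$-conjugation.

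If $G = NC$ then $G = N\cdot(\langle z\rangle\times H) = N\times H$ (using $N\cap H = 1$), giving type~\ref{type-V}. If $[G:NC] = 2$ then $|G| = 240|H|$, so a Sylow~$2$-subgroup of $G$ has order~$16$, contains $Q_8\leq N$, and is cyclic or quaternionic by lemma~\ref{lem-Sylow-subgroups-are-cyclic-or-quaternion}; therefore it is $Q_{16}$. Picking any order-$4$ element $\phi\in Q_{16}\setminus Q_8$, we have $\phi^2 = z$, and since $\phi\notin NC$ it acts on $N$ by an outer automorphism. Set $\Phi = \langle\phi\rangle$. The proper subgroup $\Phi H$ has cyclic Sylow~$2$-subgroup $\Phi\iso\Z/4$, so by the inductive hypothesis it has type~\ref{type-I} (the other types force the Sylow~$2$-subgroup to contain $Q_8$). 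Writing $\Phi H = A:(B'\times\Phi)$ as in theorem~\ref{thm-the-groups}\eqref{type-I}, lemma~\ref{lem-metacyclic-decomposition} tells us $B'$ is an $A$-conjugate of $B$, and the direct factor $B'\times\Phi$ shows $\Phi$ centralizes $B'$; replacing $B$ by $B'$ yields the type~\ref{type-VI} structure.

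For uniqueness, $N$ is characteristic as the unique maximal perfect subgroup of $G$ (the eventually-stable term of the derived series, since $G/N$ is solvable). Moreover $H = O(G)$: any odd normal subgroup of $G$ meets $N$ in an odd normal subgroup of $2A_5$, which is trivial, so it centralizes $N$ and lies in the odd part $H$ of $C$; conversely $H$ itself is odd and normal in $G$. Hence both $N$ and $H$ are characteristic, and $(A,B)$-uniqueness reduces to lemma~\ref{lem-metacyclic-decomposition}. For $\Phi$ in type~\ref{type-VI}, given two choices $\Phi,\Phi^*$ I would first conjugate by an element of $A$ so that $B^* = B$, and then observe that both $\Phi$ and $\Phi^*$ yield type~\ref{type-I} decompositions of the proper subgroups $\Phi H$ and $\Phi^* H$ sharing the same $A$ and $B$; the persistence statement of lemma~\ref{lem-metacyclic-decomposition}\eqref{item-decomposition-persists} then conjugates one to the other by an element of $C_A(B)$. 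The main obstacle will be handling this matching cleanly: conjugation by $N$ changes only the inner-automorphism component of a $\Phi$-generator's action on $N$, so one must verify that a combination of $N$- and $C_A(B)$-conjugations suffices to identify $\Phi$ with $\Phi^*$ while respecting the previously fixed $A$, $B$, and $N$.
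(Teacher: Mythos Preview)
Your existence argument is essentially the paper's, with two pleasant variations: you show the Sylow $2$-subgroup of $C$ is $\langle z\rangle$ via the central product $\Z/4*Q_8$ (seven involutions), whereas the paper argues that otherwise $G/\langle z\rangle$ would contain $(\Z/2)^3$; and you obtain $[\Phi,B]=1$ by invoking the inductive type~\ref{type-I} structure of the proper subgroup $\Phi H$, whereas the paper argues directly that a prime dividing $|[\Phi,B]|$ would force a Sylow subgroup of $B$ into $G'$ via corollary~\ref{cor-cyclic-transfer}. Both routes are fine. (Minor omission: you should also record that the prime-order elements of $B$ centralize~$A$, which follows from lemma~\ref{lem-abelian-subgroups-are-cyclic} as in lemma~\ref{lem-if-G-mod-odd-subgroup-is-a-2-group}.)

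The genuine gap is the uniqueness of $\Phi$, which you flag but do not close. Your suggested route through lemma~\ref{lem-metacyclic-decomposition}\eqref{item-decomposition-persists} does not apply: that persistence statement compares complements inside a \emph{single} group, but $\Phi H$ and $\Phi^*H$ are in general distinct subgroups of $G$, so there is no common ambient group in which to invoke it. The paper avoids this by reversing your order of choices: it first fixes $B$, then uses the Frattini argument to find a Sylow $2$-subgroup $T\iso Q_{16}$ of $G$ lying in $N_G(B)$, and takes $\Phi$ to be one of the two $\Z/4$'s in $T\setminus(T\cap 2A_5)$. Uniqueness then reduces to Sylow conjugacy inside $N_G(B)$ together with the fact that those two $\Z/4$'s are swapped by an element of $T\cap 2A_5\leq N_G(B)$. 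You can patch your argument the same way: after normalizing so that $B=B^*$, observe that both $\Phi$ and $\Phi^*$ lie in $N_G(B)$ (they centralize $B$), and that $N_G(B)$ contains $N\cdot\Phi$ of order $240$, hence has Sylow $2$-subgroups $Q_{16}$; now extend each of $\Phi,\Phi^*$ to a Sylow $2$-subgroup of $N_G(B)$ and run the paper's two-step conjugation inside $N_G(B)$.
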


\begin{proof}
We will show that there is a such a tuple, unique up to conjugation,
whose $2A_5$ term is the given normal subgroup.  The existence part of
this assertion shows that $G\sim 2A_5\times(A:B)$ or
$G\sim\bigl(2A_5\times(A:B)\bigr)\cdot2$ as in
theorem~\ref{thm-the-groups}.  It follows that $G$ has a unique normal
subgroup isomorphic to $2A_5$.  So every tuple has
this particular subgroup as its $2A_5$ term.  The uniqueness of the
tuple up to conjugacy follows.

It remains to prove the lemma for the given $2A_5$ subgroup.
We define $I$ as the subgroup of $G$ acting on $2A_5$ by inner
automorphisms.  Since $\Out(2A_5)=2$, $I$ has index $1$ or~$2$ in
$G$.  By its definition, $I$ is generated by $2A_5$ and $C(2A_5)$,
whose intersection is the group $Z$ generated by $G$'s central
involution.  We claim that $Z$ is the full Sylow $2$-subgroup of
$C(2A_5)$.  Otherwise, $2A_5/Z\times \bigl(C(2A_5)/Z\bigr)\leq G/Z$
would contain an elementary abelian $2$-group of rank~$3$.  This is
impossible because the Sylow $2$-subgroups of $G/Z$ are dihedral.  From
corollary~\ref{cor-cyclic-transfer} and the fact that $C(2A_5)$ has cyclic Sylow
$2$-subgroups, we get $C(2A_5)=Z\times O(C(2A_5))$.  It follows
that $I=2A_5\times O(I)$.

Obviously we must choose $A$ and $B$ such that $A:B=O(I)$.  As in the
previous proof, lemma~\ref{lem-metacyclic-decomposition} shows that there is an essentially unique way to
satisfy the conditions that $|A|$ and $|B|$ are coprime and that every
Sylow subgroup of $B$ acts nontrivially on $A$.  That is, $A$ is uniquely
determined and $B$ is determined up to conjugacy in $O(I)$.  Also, $|A|$ and $|B|$ are
coprime to $15$ because $G$ has no subgroup $3\times3$ or $5\times5$
(lemma~\ref{lem-abelian-subgroups-are-cyclic}).  The prime-order elements of $B$ act trivially on $A$
by the same argument as in the previous proof.

We have shown that $I$ has type~\ref{type-V}, so if $I=G$ then the
proof is complete.  Otherwise, we know
$G\sim\bigl(2A_5\times(A:B)\bigr)\cdot2$ and we must construct a
suitable subgroup $\Phi\iso\Z/4$ of~$G$.  Because all complements to
$A$ in $A:B$ are conjugate, the Frattini argument shows that $N(B)$
surjects to $G/A$.  So $N(B)$ contains a Sylow $2$-subgroup $T\iso
Q_{16}$ of $G$.  By Sylow's theorem it is unique up to conjugacy in
$N(B)$.  Obviously $T\cap2A_5$ is isomorphic to $Q_8$.  Now, $T\iso
Q_{16}$ contains exactly two subgroups isomorphic to $\Z/4$ that lie
outside $T\cap2A_5$.  So we must take $\Phi$ to be one of them.  They are
conjugate under $T\cap2A_5$, so $\Phi$ is uniquely defined up to a conjugation that
preserves each of $A$, $B$ and $2A_5$.  It remains only to check that
$\Phi$ has the properties required for $G$ to be a type~\ref{type-VI}
group.  It acts on $2A_5$ by an outer automorphism because it does not
lie in $I$, by construction.     To see  that $\Phi$ commutes with
$B$, suppose to the contrary.  Then some  subgroup of $B$ of prime order~$p$
would lie in $G'$.  Then corollary~\ref{cor-cyclic-transfer} would show that the Sylow
$p$-subgroup of $B$ lies in $G'$.  So it acts trivially on $A$, which
is a contradiction.
\end{proof}

\begin{lemma}
\label{lem-if-G-has-normal-Q8-that-lies-in-commutator-subgroup}
Suppose $G$ contains a normal subgroup $Q\iso Q_8$ that lies in $G'$.  Then
$G$ has type \ref{type-III} or~\ref{type-IV} from theorem~\ref{thm-the-groups}, for a
unique-up-to-conjugation tuple of subgroups $(A,B,Q_8,\Theta)$ or
$(A,B,Q_8,\discretionary{}{}{}\Theta,\Phi)$.
\end{lemma}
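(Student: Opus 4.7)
The plan is to proceed in parallel with lemma~\ref{lem-if-G-has-normal-binary-icosahedral}, organizing the argument around the centralizer $C := C_G(Q_8)$.

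First I would show $C = Z \times O(C)$, where $Z := Z(Q_8)$ is $G$'s unique involution: any $\Z/4 \leq C$ would centralize $Q_8$ and yield a 2-subgroup with more than one involution, contradicting lemma~\ref{lem-unique-involution}, so the Sylow 2-subgroup of $C$ equals $Z$, and Burnside transfer (corollary~\ref{cor-cyclic-transfer} at $p = 2$) splits off $Z$. Next I would determine $G/C$ inside $\Aut(Q_8) \iso S_4$: it contains $\Inn(Q_8) = V_4$, leaving the options $V_4$, $A_4$, $D_8$, $S_4$. The hypothesis $Q_8 \leq G'$ eliminates $V_4$ (the resulting central product forces $G' \cap Q_8 = Z$) and $D_8$ (whose outer $\Z/2$ acts as a transposition in $\Out(Q_8) = S_3$, fixing one cyclic $\Z/4 \leq Q_8$ setwise, so all relevant commutators land inside that $\Z/4$). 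Hence $G/C \in \{A_4, S_4\}$, corresponding to types~\ref{type-III} and~\ref{type-IV}.

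Next I would invoke induction on the proper subgroup $I := Q_8 \cdot C$; since $Z \leq Q_8$ this central product collapses to $Q_8 \times O(C)$. Eliminating type~\ref{type-I} ($I$'s Sylow $2$ is $Q_8$, not cyclic), types~\ref{type-III}--\ref{type-IV} (a 3-element acting nontrivially on $Q_8$ is impossible inside a direct product with an odd group), and types~\ref{type-V}--\ref{type-VI} (any normal subgroup of $Q_8 \times O(C)$ splits as a product of a subgroup of $Q_8$ with one of $O(C)$, but $2A_5$ is not such a product) forces $I$ to be of type~\ref{type-II} with quaternionic factor $T = Q_8$. This yields $O(C) = A_0 : B_0$ satisfying lemma~\ref{lem-metacyclic-decomposition}.

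Finally, take $\Theta$ to be a Sylow 3-subgroup of $G$; it is cyclic, with index-3 subgroup $\Theta^3 = \Theta \cap C$ centralizing $Q_8$ and forming a Sylow 3-subgroup of $O(C)$. Set $A$ equal to the largest subgroup of $A_0$ of order prime to $3$ (characteristic in $A_0$, hence normal in $G$), and use the Frattini argument on the $O(C)$-conjugacy of $A_0$'s complements (lemma~\ref{lem-metacyclic-decomposition}(\ref{item-decomposition-persists})) to replace $\Theta$ by an appropriate conjugate lying in $N_G(B_0)$; then set $B$ equal to the $3'$-part of $B_0$. Order counting gives that $Q_8 \times A$ and $\Theta \cdot B$ have trivial intersection and product equal to $G$, and the remaining type-\ref{type-III}/\ref{type-IV} action conditions (that $\Theta$ commutes with $B$, that $\Theta$'s order-$3$ elements centralize $A$, and that every Sylow of $B$ acts nontrivially on $A$ while every prime-order element of $B$ acts trivially) follow from applying induction to the odd subgroups $\Theta B$ and $A \Theta$ together with the metacyclic conditions on $A_0, B_0$. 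In type~\ref{type-IV}, an additional $\Phi \iso \Z/4$ is extracted from a Sylow $2$-subgroup of $N_G(B)$ outside $I$, mirroring the construction in lemma~\ref{lem-if-G-has-normal-binary-icosahedral}. Uniqueness follows from the uniqueness in lemma~\ref{lem-metacyclic-decomposition} together with Sylow conjugacy. The main obstacle will be cleanly arranging the Sylow 3-structure so that $\Theta$ and $B$ genuinely commute to give $\Theta \times B$ as a subgroup of $G$; this parallels the delicate separation of $2A_5$ from the odd part in lemma~\ref{lem-if-G-has-normal-binary-icosahedral}.
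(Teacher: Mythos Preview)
Your outline is sound and close in spirit to the paper, but the organization differs in one key respect. The paper does not work with $C=C_G(Q_8)$ and its full odd part. Instead it lets $J$ be the preimage in $G$ of the normal $\Z/3$ inside $\Out(Q_8)\iso S_3$ (so $[G:J]\in\{1,2\}$), observes that $J$ surjects onto $\Z/3$, and uses corollary~\ref{cor-cyclic-transfer} to see that the Sylow $3$-subgroups of $J$ inject into $J/J'$. It then takes $I$ to be the smallest normal subgroup of $J$ with $J/I$ a $3$-group; this $I$ automatically has order prime to~$3$. After showing $I=Q_8\times O(I)$ (exactly as you argue for $Q_8\times O(C)$), the metacyclic decomposition of $O(I)$ already produces $A$ and $B$ of order prime to~$3$, and $\Theta$ is then obtained as a Sylow $3$-subgroup of $N_J(B)$ via Frattini. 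Your route keeps the $3$-part inside $O(C)$ and strips it out afterward; this works, but it costs you the extra bookkeeping of tracking whether the Sylow $3$-subgroup of $O(C)$ lands in $A_0$ or in $B_0$.

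There is, however, a genuine gap in your plan for showing that $\Theta$ commutes with~$B$. Applying induction to the odd group $\Theta B$ only tells you it is metacyclic of type~\ref{type-I}; that is perfectly compatible with $\Theta$ acting nontrivially on $B$ (e.g.\ $B$ could be the ``$A$''-part and $\Theta$ the ``$B$''-part of that internal decomposition). The paper's argument is global, using the ambient commutator subgroup $G'$ rather than $(\Theta B)'$: if $[\Theta,B]\neq1$ then, since $\Theta$ normalizes~$B$, some prime-order element of $B$ lies in $G'$; by corollary~\ref{cor-cyclic-transfer} the entire Sylow $p$-subgroup of $G$ (hence of $B$) then lies in $G'$; since $\Aut A$ is abelian, $G'$ acts trivially on~$A$; so that Sylow subgroup of $B$ centralizes~$A$, contradicting the metacyclic condition you have already established for~$B$. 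The same transfer argument, not induction on a small odd subgroup, is what forces $\Phi$ to commute with $B$ in the type~\ref{type-IV} case.
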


\begin{proof}
Mimicking the previous proof, we will show that there is a
unique-up-to-conjugation tuple of such subgroups whose $Q_8$ term
is~$Q$.  In particular, $G\sim(Q_8\times A):(B\times\Theta)$ or
$G\sim\bigl((Q_8\times A):(B\times\Theta)\bigr)\cdot2$ as in
theorem~\ref{thm-the-groups}.  This shows that $G$ has a unique normal
subgroup isomorphic to $Q_8$, namely~$Q$, which will complete the proof for the same
reason as before.

To show that there is a unique-up-conjugation tuple of subgroups whose
$Q_8$ term is $Q$, consider the natural map $G\to\Aut Q\iso S_4$.  We
claim the image $\Gbar$ contains a $3$-element.  Otherwise,
$\Gbar\sset\Aut Q\iso S_4$ would lie in a Sylow $2$-subgroup of
$\Aut Q$, which is dihedral of order~$8$ with
commutator subgroup of order~$2$.  Therefore $|\Gbar'|$ would
have order${}\leq2$.  But this is a contradiction because $Q$ lies in
$G'$ and $\Qbar\iso2\times2$.
We have shown that the nontrivial $3$-subgroup of $\Out Q\iso S_3$ lies in
the image of $G$.  (Now we can
discard~$\Gbar$.)

Let $J$ be the preimage in $G$ of this copy of $\Z/3$.  So $G\sim
J.\hbox{($1$ or $2$)}$.  Obviously we must choose $A$, $B$ and
$\Theta$ so that $J=(Q\times A):(B\times\Theta)$.  By construction,
$J$ has a nontrivial map to $\Z/3$.  So
corollary~\ref{cor-cyclic-transfer} assures us that $J$'s Sylow
$3$-subgroups map faithfully to $J$'s abelianization.  In particular,
$J=I.\hbox{(cyclic $3$-group)}$ where $|I|$ is prime to~$3$.
Mimicking the
previous proof shows that $I=Q\times O(I)$.  
So we must choose $A$ and $B$ so that $A:B=O(I)$.

Continuing to follow the previous proof shows that there is an
essentially unique way to satisfy the conditions that $|A|$ and $|B|$
are coprime and that every Sylow subgroup of $B$ acts nontrivially on
$A$.  That is, $A$ is uniquely determined and $B$ is determined up to
conjugacy in $O(I)$.  The prime-order elements of $B$ act trivially on $A$ for
the same reason as before.    We have shown
\begin{equation*}
  G\sim
  \underbrace{
    \overbrace{\bigl(Q\times(A:B)\bigr)}^I
    .(\hbox{nontrivial cyclic $3$-group})}_J.(\hbox{$1$ or $2$})
\end{equation*}
Having worked our way to the ``middle'' of $G$, we now work outwards
and construct $\Theta$ and (if required) $\Phi$.  Using the conjugacy
of complements to $A$ in $A:B$, the Frattini argument shows that
$N_J(B)$ maps onto $J/(A:B)$.  So $N_J(B)$ contains a Sylow
$3$-subgroup of $J$, indeed a unique one up to conjugacy in $N_J(B)$.
So there is an essentially unique possibility for $\Theta$.  As
observed above, $\Theta$ acts nontrivially on $Q$.  Its $3$-elements
act trivially on $A$ for the same reason that $B$'s prime-order
elements do.  Finally, $\Theta$ commutes with $B$.  Otherwise, some
Sylow subgroup of $B$ would lie in
$G'$,  hence act trivially on
$A$, contrary to $B$'s construction.

We have shown that $J$ has type~\ref{type-III}, so if $J=G$ then we are done.
Otherwise, mimicking the previous proof shows that there exists   a
group
$\Phi\iso\Z/4$ in $N(B\times\Theta)$ that does not lie in $J$, and that such
a group is unique up to conjugacy in $N(B\times\Theta)$.
Also as before, $\Phi$ commutes with $B$.  
By $\Phi\not\leq J$, $\Phi$'s $4$-elements act on $Q$ by outer automorphisms.  Since the
images of $\Phi$ and $\Theta$ in $\Out Q\iso S_3$ do not commute, $\Phi$
cannot commute with $\Theta$.  Therefore $\Phi$'s $4$-elements must invert
$\Theta$.  So $G$ has type~\ref{type-IV} and the proof is complete.
\end{proof}

To finish the proofs of theorems \ref{thm-the-groups}
and~\ref{thm-uniqueness-of-structure} we will show that $G$ satisfies
the hypotheses of one of lemmas \ref{lem-if-G-mod-odd-subgroup-is-a-2-group}--\ref{lem-if-G-has-normal-Q8-that-lies-in-commutator-subgroup}.  By imperfectness, $G$
has a normal subgroup $M$ of prime index~$p$.  By induction, $M$ has
one of the structures \ref{type-I}--\ref{type-VI}.  We will examine
the various cases and see that one of the lemmas applies.

If $M$ has type \ref{type-V} or~\ref{type-VI} then it contains a unique
subgroup $2A_5$, which is therefore normal in $G$, so lemma~\ref{lem-if-G-has-normal-binary-icosahedral}
applies.  If $M$ has type \ref{type-III} or~\ref{type-IV} then it contains a
unique normal subgroup $Q_8$, which is therefore normal in~$G$.  Also,
this $Q_8$ lies in $M'$, hence $G'$, so lemma~\ref{lem-if-G-has-normal-Q8-that-lies-in-commutator-subgroup} applies.
Finally, suppose $M$ has type \ref{type-I} or~\ref{type-II}, so $M\sim
A:\bigl(B\times\hbox{(2-group $T$)}\bigr)$.  If $p=2$ then $O(G)=O(M)=A:B$
and $G/O(G)$ is a $2$-group, so lemma~\ref{lem-if-G-mod-odd-subgroup-is-a-2-group} applies.  

So suppose
$p>2$, and observe that $G/O(M)$ has structure $T.p$.  Choose a
subgroup $P$ of order~$p$ in $G/O(M)$.  If it acts trivially on $T$
then we have $G/O(M)\iso T\times p$, so $G/O(G)\iso T$ and
lemma~\ref{lem-if-G-mod-odd-subgroup-is-a-2-group} applies.  So
suppose $P$ acts nontrivially on $T$.  The
automorphism group of any cyclic or quaternionic $2$-group is a
$2$-group, except for $\Aut Q_8\iso S_4$.  Since $P$ acts
nontrivially, we must have $T\iso Q_8$ and $p=3$.  The nontriviality
of $P$'s action also implies $T<G'$.  From this and the fact that
$\Aut A$ is abelian, it follows that $T$ acts trivially on $A$.  So
$T$ is normal in $M$.  As $M$'s unique Sylow $2$-subgroup, it is
normal in $G$. So lemma~\ref{lem-if-G-has-normal-Q8-that-lies-in-commutator-subgroup} applies.  
This completes the proofs of
theorems \ref{thm-the-groups} and~\ref{thm-uniqueness-of-structure}.

\section{Irredundant enumeration}
\label{sec-constructive-classification}

\noindent
The uniqueness expressed in theorem~\ref{thm-uniqueness-of-structure}
makes the isomorphism classification of groups in
theorem~\ref{thm-the-groups} fairly simple.  Namely, one specifies
such a group $G$ up to isomorphism by choosing its type
\ref{type-I}--\ref{type-VI}, a suitable
number $a$ for the order of $A$, a suitable subgroup $\Gbar$ of the
unit group $(\Z/a)^*$ of the ring $\Z/a$, and a suitable number $g$ for
the order of $G$.  In a special case one must also specify a
suitable subgroup $\Gbar_0$ of $\Gbar$.  Before developing this, we
illustrate some redundancy in Wolf's list.

\begin{example}[Duplication]
\label{example-Wolf-duplication}
Wolf's presentations of type II in \cite[theorem~6.1.11]{Wolf} have
generators $A$, $B$, $R$ and relations
\begin{gather*}
A^m=B^n=1
\qquad
B A B^{-1}=A^r
\\
R^2=B^{n/2}
\qquad
R A R^{-1}=A^l
\qquad
R B R^{-1}=B^k
\end{gather*}
where $(m,n,r,k,l)$ are numerical parameters satisfying nine
conditions.  It turns out that the six choices
$(3,20,-1,-1,\pm1)$,
$(5,12,-1,-1,\discretionary{}{}{}\pm1)$ and
$(15,4,-1,-1,\hbox{$4$ or $11$})$ give isomorphic groups,  namely
$(3\times5):Q_8$.  Here one class of $4$-elements
in $Q_8$
inverts just the $\Z/3$ factor, another class 
inverts just the $\Z/5$ factor, and the third class
inverts both.  Wolf decomposes this group by choosing an index~$2$
subgroup (call it $H$) with cyclic Sylow $2$-subgroup, taking $A$ to
generate $H'$, and taking $B$ to generate a complement to $H'$ in $H$
(which always exists).  Then he takes $R$ to be a $4$-element outside
of $H$, that normalizes $\gend{B}$.  There are three ways to choose
$H$, corresponding to the three $\Z/4$'s in $Q_8$.  After $H$ is
chosen, there are unique choices for $\gend{A}$ and $\gend{B}$, but
two choices for $\gend{R}$.  These choices lead to the six different
presentations.  Our form of this group is
$A:(B\times Q_8)=15:(1\times Q_8)=15:Q_8$. (Our $A$ and $B$ are
subgroups, while Wolf's $A$ and $B$ are elements.)

A minor additional source of redundancy is that replacing Wolf's $B$ by a different
generator of $\gend{B}$ can change the parameter $r$ in the
presentation above.
\end{example}

Given a group $G$ from theorem~\ref{thm-the-groups}, we record the
following invariants.  First we record its type
\ref{type-I}--\ref{type-VI}, which is
well-defined by the easy part of
theorem~\ref{thm-uniqueness-of-structure}, proven in
section~\ref{sec-groups}.  Second we record $g:=|G|$, $a:=|A|$
and
$\Gbar$, where bars will indicate images in $\Aut A$.
Finally, and only if $G$ has
type~\ref{type-II} and its order is divisible by~$16$, we
record $\Gbar_0$, where $G_0$ is the unique index~$2$ subgroup of $G$
with cyclic Sylow $2$-subgroups.

$\Aut A$ is canonically isomorphic to the group of units $(\Z/a)^*$ of
the ring $\Z/a$, with
$u\in(\Z/a)^*$ corresponding to the $u$th power map.  Therefore we will
regard $\Gbar$ and $\Gbar_0$ as subgroups of $(\Z/a)^*$.  This
is useful when comparing two groups $G$,~$G^*$:

\begin{theorem}[Isomorphism recognition]
\label{thm-isomorphism-detection}
Two finite groups $G$ and $G^*$, that  act freely and isometrically on spheres of some
dimensions, are isomorphic if and only
if
they have
the same type, and $g=g^*$, $a=a^*$, 
$\Gbar=\Gbar^*$, and (if they are
defined) $\Gbar_0=\Gbar_0^*$.
\end{theorem}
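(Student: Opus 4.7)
For the forward direction, suppose $\phi\colon G\to G^*$ is an isomorphism. Theorem~\ref{thm-uniqueness-of-structure} shows $G$ and $G^*$ share the same type, and after modifying $\phi$ by an inner automorphism of $G^*$ we may assume $\phi$ carries $A$ to $A^*$ (and, when type~\ref{type-II} holds and $16\mid g$, also $G_0$ to $G_0^*$, since $G_0$ is characterized intrinsically as the unique index~$2$ subgroup with cyclic Sylow $2$-subgroup). Hence $g=g^*$ and $a=a^*$ immediately. Under the canonical identification $\Aut A=(\Z/a)^*$, in which $u\in(\Z/a)^*$ is the $u$th power map (an identification that commutes with any isomorphism of cyclic groups of order $a$), $\phi$ carries the image of the conjugation action of $G$ on $A$ to that of $G^*$ on $A^*$, giving $\Gbar=\Gbar^*$; the same argument applied to $G_0,G_0^*$ gives $\Gbar_0=\Gbar_0^*$ when defined.

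For the reverse direction, assume the invariants agree. The plan is to show that the invariants determine the isomorphism class. From the type together with $(g,a)$ one reads off the orders of all the subgroups appearing in theorem~\ref{thm-the-groups}: for example, $|T|$ and $|B|$ are the $2$- and odd parts of $g/a$ for types~\ref{type-I}--\ref{type-II}; for type~\ref{type-III}, $|\Theta|$ is the $3$-part and $|B|$ the $3'$-part of $g/(8a)$; and similar decompositions handle the remaining types. Each of the cyclic subgroups $B$, $\Theta$, $\Phi$ acts on the cyclic group $A$ through the abelian group $\Aut A=(\Z/a)^*$, and since their images land in the ``odd'', ``$3$-power'', and ``$2$-power'' parts of $\Gbar$ respectively (with mutually coprime orders), each image is recoverable from $\Gbar$ alone. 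Homomorphisms from cyclic groups are then determined by their images up to a choice of generator. The nonabelian ingredients are rigid: $Q_8$ and $2A_5$ are specific groups; $\Out Q_8\iso S_3$ has a unique conjugacy class of $3$-elements and a unique class of order-$2$ outer automorphisms, so the actions of $\Theta$ on $Q_8$ (types~\ref{type-III}--\ref{type-IV}) and of $\Phi$ on $Q_8$ (type~\ref{type-IV}) are determined up to automorphism of $Q_8$; and $\Out 2A_5\iso\Z/2$ leaves no ambiguity in type~\ref{type-VI}. Assembling these pieces yields an isomorphism $G\to G^*$ in every case except the one below.

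The hard case, and the reason $\Gbar_0$ appears in the theorem, is type~\ref{type-II} with $16\mid g$, so $T\iso Q_{2^m}$ with $m\geq4$. Here $T^{\mathrm{ab}}\iso(\Z/2)^2$, and $\Out T$ is a $2$-group that fixes the class of the unique characteristic cyclic index-$2$ subgroup $U\leq T$ and swaps the other two classes. A homomorphism $T\to(\Z/a)^*$ (which factors through $T^{\mathrm{ab}}$ since the target is abelian) is therefore determined up to $\Out T$ by the pair (image of $T$, image of $U$) rather than by the image of $T$ alone, and in general different choices give nonisomorphic groups. Since $G_0=A:(B\times U)$ by construction, the image of $U$ is recovered as the $2$-part of $\Gbar_0$ (the complementary odd part being $\bar B$), so $\Gbar_0$ exactly removes this ambiguity. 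When $|T|=8$ (so $16\nmid g$), $\Out Q_8\iso S_3$ acts transitively on the three index-$2$ cyclic subgroups of $Q_8$, so $\Gbar$ alone already determines $T\to(\Z/a)^*$ up to $\Out T$ and no extra invariant is needed. This dichotomy between $m=3$ and $m\geq4$ is the crux of the argument; everything else is routine assembly from the structural description in theorem~\ref{thm-the-groups}.
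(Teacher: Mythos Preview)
Your proof is correct and follows essentially the same approach as the paper's: the forward direction uses the uniqueness theorem to see that any isomorphism respects $A$, and the reverse direction reconstructs $G$ from the invariants type-by-type, with the key subtlety being the $\Gbar_0$ invariant for type~\ref{type-II} when $16\mid g$. Two places where your sketch is slightly glib compared with the paper: in type~\ref{type-IV} you note that the $\Theta$- and $\Phi$-actions on $Q_8$ are \emph{each} determined up to $\Aut Q_8$, but you should check they are \emph{simultaneously} matchable (they generate an $S_3$ in $\Aut Q_8\iso S_4$, and any two such $S_3$'s are conjugate); and in type~\ref{type-VI}, saying ``$\Out 2A_5\iso\Z/2$'' is not quite enough---what is needed is that the outer involutions in $\Aut 2A_5\iso S_5$ form a single conjugacy class, which they do (the transpositions).
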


\begin{proof}
First suppose $G\iso G^*$.  We have already mentioned that they have the same type, and they obviously have the same order.  By
the uniqueness of $A$ and $A^*$ (theorem~\ref{thm-uniqueness-of-structure}), any
isomorphism $G\to G^*$ identifies $A$ with $A^*$.  In particular,
$a=a^*$, and the action of $G$ on $A$ corresponds to that of
$G^*$ on $A^*$.   So we must have 
$\Gbar=\Gbar^*$, and (when defined) 
$\Gbar_0=\Gbar_0^*$.

Now suppose $G$ and $G^*$ have the same invariants; we must construct
an isomorphism between them.  Case analysis seems
unavoidable, but the ideas are uniform and only the details
vary. 

{\it Type~\ref{type-I}}.  Since $A$ and $A^*$ are cyclic of the same order,
we may choose an isomorphism $A\iso A^*$.  By order considerations we
have $|B|=|B^*|$ and $|T|=|T^*|$.  Again by order considerations, the
identification of $\Gbar$ with $\Gbar^*$ under the canonical
isomorphism
$\Aut A=\Aut A^*$
identifies $\Bbar$ with $\Bbar^*$ and $\Tbar$ with $\Tbar^*$.  Because
$B$ and $B^*$ are cyclic of the same order, we may lift the
identification $\Bbar\iso\Bbar^*$ to an isomorphism $B\iso B^*$, and
similarly for $T$ and $T^*$.  Together with $A\iso A^*$, these give an isomorphism $G\iso
G^*$.

{\it Type~\ref{type-II} when $16$ divides $|G|=|G^*|$}.  Because $\Gbar_0$ is
identified with $\Gbar^*_0$, the previous case gives an isomorphism
$G_0\iso G^*_0$ that identifies $A$ with $A^*$, $B$ with $B^*$ and the
cyclic $2$-group $T_0:=T\cap G_0$ with $T^*_0:=T^*\cap G^*_0$.
We will extend this to an isomorphism $G\iso G^*$.
Choose
an element $\phi$ of $T-T_0$.  By the identification of $\Tbar$ with
$\Tbar^*$ and $\Tbar_0$ with $\Tbar^*_0$, there exists an element
$\phi^*$ of $T^*-T^*_0$ whose action on $A^*$ corresponds to $\phi$'s action on
$A$.
As elements of $T-T_0$ and $T^*-T_0^*$,  $\phi$ and $\phi^*$
have order~$4$.  
They commute with
$B$ and $B^*$ respectively. Their squares are
the unique involutions in $T_0$ and $T^*_0$, which we have already identified
with each other.
So identifying $\phi$ with $\phi^*$ extends our isomorphism
$\Gbar_0\iso\Gbar^*_0$ to $\Gbar\iso\Gbar^*$.

{\it Type~\ref{type-II} when $16$ does not divide $|G|=|G^*|$}.  The argument
for type~\ref{type-I} identifies $A$ with $A^*$ and $B$ with $B^*$.  Both
$G$ and $G^*$ have Sylow $2$-subgroups isomorphic to~$Q_8$.  
$\Tbar$
is the Sylow $2$-subgroup of $\Gbar$, and is elementary abelian of
rank${}\leq2$ because 
$Q_8/Q_8'\iso2\times2$.
The identification of $\Gbar$ with $\Gbar^*$ identifies $\Tbar$
with  $\Tbar^*$.  Because $\Aut Q_8$ acts as
$S_3$ on $Q_8/Q_8'$, it is possible to lift the identification
$\Tbar\iso\Tbar^*$ to an isomorphism $T\iso
T^*$.  Now
our identifications $A\iso A^*$, $B\iso B^*$, $T\iso T^*$ fit together
to give an
isomorphism $G\iso G^*$.

{\it Type~\ref{type-III}}.  Choose an isomorphism $A\iso A^*$.  By $|G|=|G^*|$
we get $|B|=|B^*|$ and $|\Theta|=|\Theta^*|$.  The
identification of $\Gbar\leq\Aut A$ with $\Gbar^*\leq\Aut A^*$
identifies $\Thetabar$ with $\Thetabar^*$ and $\Bbar$ with $\Bbar^*$.
These identifications can be lifted to isomorphisms
$\Theta\iso\Theta^*$ and $B\iso B^*$ by the same reasoning as before.
Because all $3$-elements in $\Aut Q_8\iso S_4$ are conjugate, it is
possible to choose an isomorphism $Q_8\iso Q_8^*$ compatible with
our isomorphism $\Theta\iso\Theta^*$ and the
homomorphisms $\Theta\to\Aut Q_8$ and $\Theta^*\to\Aut Q_8^*$.  Now our
identifications $A\iso A^*$, $B\iso B^*$, $\Theta\iso\Theta^*$ and
$Q_8\iso Q_8^*$ fit together to give an isomorphism $G\iso G^*$.

{\it Type~\ref{type-IV}.}  Identify $A$, $B$ and $\Theta$ with $A^*$,
$B^*$ and $\Theta^*$ as in the previous case.  Choose generators
$\phi$ and $\phi^*$ for $\Phi$ and $\Phi^*$.  Their actions on $A$ and
$A^*$ correspond, because they act by the unique involutions in
$\Gbar$ and $\Gbar^*$ if these exist, and trivially otherwise.  The
image of $\phi$ in $\Aut Q_8\iso S_4$ normalizes the image of
$\Theta$, so together they generate a copy of $S_3$, and similarly for
their starred versions.  Any isomorphism from one $S_3$ in $\Aut Q_8$ to
another is induced by some conjugation in $\Aut Q_8$.  (One checks
this using $\Aut Q_8\iso S_4$.)  Therefore it is possible to identify
$Q_8$ with $Q_8^*$, such that the actions of $\Theta$ and $\Theta^*$
on them correspond, and the actions of $\phi$ and
$\phi^*$ also correspond.  Using this, and identifying $\phi$ with
$\phi^*$, gives an isomorphism $G\iso G^*$.

{\it Type~\ref{type-V}.} Identify $A$ and $B$ with $A^*$ and $B^*$ as in
previous cases, and $2A_5$ with $2A_5^*$ however one likes.

{\it Type~\ref{type-VI}.} Identify $A$ and $B$ with $A^*$ and $B^*$ as
before, and choose generators $\phi$ and $\phi^*$ for $\Phi$ and
$\Phi^*$.  As in the type~\ref{type-IV} case, their actions on $A$ and $A^*$
correspond.  Next, $\phi$ and $\phi^*$ act on $2A_5$ and $2A_5^*$
by involutions which are
not inner automorphisms.  All such automorphisms of $2A_5$ are
conjugate in $\Aut(2A_5)$.  (They correspond to the involutions in $S_5-A_5$.)  So we
may identify $2A_5$ with $2A_5^*$ in such a way that the actions of
$\phi$ and $\phi^*$ on them correspond.  Using this, and identifying
$\phi$ with $\phi^*$, gives an isomorphism $G\iso G^*$.
\end{proof}

We can now parameterize the isomorphism classes of finite groups $G$
that admit free actions on spheres.  First one specifies a
type~\ref{type-I}--\ref{type-VI}.  Then one specifies a positive
integer $a$, a subgroup $\Gbar$ of $(\Z/a)^*$, and possibly a subgroup
$\Gbar_0$ of $\Gbar$, all satisfying some constraints.  Then one
chooses one or more auxiliary parameters, constrained in terms of
properties of $\Gbar$.  Together with $a$, these specify~$g$, hence
the isomorphism type of~$G$.  The following theorem is proven by
combining theorem~\ref{thm-isomorphism-detection} with an analysis of what possibilities can
actually arise.  We will
write the parameters $a$, $\Gbar$ and $g$ in the order one chooses
them, rather than in the order used in theorem~\ref{thm-isomorphism-detection}.

The constraints on the choices of parameters are difficult to express
uniformly.  But the constraint on one auxiliary parameter,
called $b$, is uniform.  For each type we obtain a positive integer
$\bbar$ from the structure of $\Gbar$, and $b$ must be the product of
$\bbar$ and nontrivial powers of all the primes dividing $\bbar$.  We
will express this by saying ``$b$ is as above.''  

\begin{theorem}[Irredundant enumeration]
  \label{thm-irredundant-list}
  Suppose $G$ is a finite group admitting a free and isometric action
  on a sphere.  Then there is exactly one tuple
  $(\hbox{\ref{type-I}---\ref{type-VI}},a,\Gbar,g)$ or
  $(\ref{type-II},a,\Gbar,\Gbar_0,g)$ listed below, whose
  corresponding group (defined at the end) is isomorphic to~$G$.\qed
\end{theorem}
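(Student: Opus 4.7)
The plan is to reduce the statement directly to Theorem~\ref{thm-isomorphism-detection}. Given $G$ from Theorem~\ref{thm-the-groups}, Theorem~\ref{thm-uniqueness-of-structure} makes well-defined the type, the subgroup $A$ (and hence $a=|A|$), the image $\Gbar\leq\Aut A\iso(\Z/a)^*$, and --- in the type~\ref{type-II} case with $16\mid g$ --- the image $\Gbar_0\leq\Gbar$ of the canonical index-$2$ subgroup $G_0$; the order $g=|G|$ is obviously invariant. Two groups with identical invariant tuples are isomorphic by Theorem~\ref{thm-isomorphism-detection}, so the matching tuple in the enumeration is automatically unique, provided the enumeration contains no duplicates (a finite bookkeeping check).

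Thus the real work is to characterize the \emph{image} of the map sending $G$ to its invariant tuple, which is what the list in the theorem describes. I would proceed type by type, unwinding the structural constraints of Theorem~\ref{thm-the-groups} into arithmetic conditions. For type~\ref{type-I}: $a$ is odd, and $\Gbar$ factors canonically in $(\Z/a)^*$ as $\Bbar\times\Tbar$ with $\Bbar$ of odd order and $\Tbar$ a cyclic $2$-group; the conditions that every nontrivial Sylow of $B$ acts nontrivially and every prime-order element of $B$ acts trivially translate to $b=\bbar\cdot\prod_{p\mid\bbar}p^{k_p}$ with each $k_p\geq1$ --- the ``$b$ is as above'' condition. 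Then $g$, $a$, $b$ determine $|T|$, constrained so that $\Tbar$'s involution (when present) acts trivially. Types~\ref{type-III} and~\ref{type-V} are analogous, with an added cyclic $3$-group $\Thetabar$ or a $2A_5$ factor, plus the coprimality $(a,6)=1$ or $(a,30)=1$ forced by Lemma~\ref{lem-abelian-subgroups-are-cyclic}. Types~\ref{type-IV} and~\ref{type-VI} attach the $\Phi$-related factor of~$2$ in $g$. Type~\ref{type-II} requires the extra parameter $\Gbar_0$ precisely because, once $16\mid g$, the quaternionic Sylow $2$-subgroup has more than one index-$2$ cyclic subgroup and $\Gbar$ alone does not record which one is $T_0=T\cap G_0$.

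Conversely, given any tuple meeting the stated constraints, one builds a realizing group by taking $A$ cyclic of order $a$, reading the orders of $B$, $T$, $\Theta$, $\Phi$, $Q_8$, $2A_5$ off $g$, $a$, and $\Gbar$, and assembling the semidirect products prescribed by Theorem~\ref{thm-the-groups} via Lemma~\ref{lem-metacyclic-decomposition} and the Schur--Zassenhaus theorem. The invariants of this construction are by design the input tuple, so Theorem~\ref{thm-isomorphism-detection} then upgrades the map ``tuple $\leftrightarrow$ group'' to a bijection. The main obstacle is not depth but breadth: maintaining uniform bookkeeping across all six types, isolating the type~\ref{type-II} subtlety that forces the auxiliary $\Gbar_0$, and handling the subcase branching in type~\ref{type-IV} depending on whether $|\Theta|=3$ or $|\Theta|>3$, which governs whether the central extension $O(M)\cdot2A_4$ splits and hence how $\Phi$ interacts with $\Theta$.
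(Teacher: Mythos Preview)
Your approach is exactly the paper's: the theorem is stated with a \qed and justified only by the preceding sentence ``got by combining theorem~\ref{thm-isomorphism-detection} with an analysis of what possibilities can actually arise,'' followed by the explicit construction instructions you also outline.

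One correction to your type~\ref{type-II} explanation: when $16\mid g$, the quaternionic group $T$ of order${}\geq16$ has a \emph{unique} index-$2$ cyclic subgroup $T_0$, not several. The reason $\Gbar_0$ is needed is rather that $\Aut T$ fixes $T_0$ (unlike the $Q_8$ case, where $\Aut Q_8\iso S_4$ permutes the three $\Z/4$'s transitively), so the surjection $T\twoheadrightarrow\Tbar$ is not determined up to automorphism of the domain by $\Tbar$ alone---one must also record which subgroup of $\Tbar$ is the image of $T_0$. Your type~\ref{type-IV} splitting remark is also a red herring: the enumeration makes no case distinction on $|\Theta|$, and $\Phi$'s interaction with $\Theta$ (inversion) is uniform.
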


\noindent
$(\hbox{Type~\ref{type-I}},a,\Gbar,g=a b t)$ where
\begin{enumerate}
\item
  $a$ is odd.
\item
  $\Gbar$ is a cyclic subgroup of $(\Z/a)^*$ of order prime to $a$.
  Define $\bbar$ and $\tbar$ as the odd and $2$-power parts of $|\Gbar|$.
\item
  $b$ is as above and $t$ is a power of~$2$, larger than~$\tbar$
  if $\tbar\neq1$.
\end{enumerate}

\noindent
$(\hbox{Type~\ref{type-II}},a,\Gbar,g=a b t)$ or
$(\hbox{Type~\ref{type-II}},a,\Gbar,\Gbar_0,g=a b t)$ where
\begin{enumerate}
\item
  $a$ is odd.
\item
  $\Gbar$ is a subgroup of $(\Z/a)^*$ which is the direct product of
  a cyclic group of order prime to $2a$ and an elementary abelian $2$-group of
  rank${}\leq2$. 
  Define $\bbar$ and $\tbar$ as the orders of these factors.
\item
  $\Gbar_0$, if specified, is a subgroup of $\Gbar$ of index~$2$ (if
  $\tbar=4$) or index${}\leq2$ (otherwise).
\item
  $b$ is as above; and 
  $t=8$, unless $\Gbar_0$ was specified, in which case~$t$ is a
   power of $2$, larger than~$8$.
\end{enumerate}

\noindent
$(\hbox{Type~\ref{type-III}},a,\Gbar,g=8ab\theta)$ where
\begin{enumerate}
  \item
    $a$ is prime to~$6$.
  \item
    $\Gbar$ is a subgroup of $(\Z/a)^*$ which is the direct product
    of a cyclic $3$-group and a cyclic group of order prime to~$6a$.
    Define $\thetabar$ and~$\bbar$ as the orders of these factors.
  \item
    $b$ is as above and $\theta$ is a power of~$3$,
    larger than $\thetabar$.
\end{enumerate}

\noindent
$(\hbox{Type~\ref{type-IV}},a,\Gbar,g=16ab\theta)$ where
\begin{enumerate}
\item
 $a$ is prime to~$6$.
  \item
    $\Gbar$ is a subgroup of $(\Z/a)^*$ which is the direct
    product of a cyclic group of order prime to~$6a$ and a group of
    order $1$ or~$2$.
    Define~$\bbar$ as the order of the first factor.
  \item
    $b$ is as above and $\theta$ is a nontrivial
    power of~$3$.
\end{enumerate}

\noindent
    $(\hbox{Type~\ref{type-V}},a,\Gbar,g=120ab)$ where
\begin{enumerate}
  \item
    $a$ is prime to~$30$.
  \item
    $\Gbar$ is a cyclic subgroup of $(\Z/a)^*$ of order prime to~$30a$.
    Define~$\bbar$ as its order.
  \item
    $b$ is as above.
\end{enumerate}

\noindent
$(\hbox{Type~\ref{type-VI}},a,\Gbar,g=240ab)$ where
\begin{enumerate}
  \item
    $a$ is prime to~$30$.
  \item
    $\Gbar$ is a subgroup of $(\Z/a)^*$ which is the direct product
    of a cyclic group of order prime to $30a$ and a group of order $1$ or~$2$.
    Define~$\bbar$ as the order of the first factor.
  \item
    $b$ is as above.
\end{enumerate}

\smallskip
Here are instructions for building $G$.  In a sense this is a
constructive version of the proof of theorem~\ref{thm-isomorphism-detection}.  For all
types, start by taking cyclic groups $A$, $B$ with orders $a$, $b$.
Up to isomorphism of the domain, $B$ has a unique surjection to the
subgroup of $\Gbar\sset\Aut A$ of order $\bbar$.  Form the
corresponding semidirect product $A:B$.  Now we consider the six
cases.

The easiest is type~\ref{type-V}---just set
$G=2A_5\times(A:B)$.

For type~\ref{type-I}, we take a cyclic group $T$ of order $t$.
Just as for $B$, there is an essentially unique surjection from $T$ to
the subgroup of $\Gbar$ of order~$\tbar$.  Then $G$ is the semidirect
product $A:(B\times T)$.

For type~\ref{type-III} one takes a cyclic group $\Theta$ of order
$\theta$.  Just as for $B$, there is an essentially unique surjection
from $\Theta$ to the subgroup of $\Gbar$ of
order~$\thetabar$.  We also take $\Theta$ to act nontrivially on
$Q_8$.  (Up to conjugacy in $\Aut Q_8$ there is a unique nontrivial
action.) Then $G$ is the semidirect product $(Q_8\times
A):(B\times\Theta)$.  $B$ acts trivially on $Q_8$; this is forced
since $|B|$ is prime to $6$.

A type \ref{type-IV} or~\ref{type-VI} group is  got from
a type \ref{type-III} or~\ref{type-V} group by adjoining a suitable $4$-element $\phi$.  In both
cases, $\phi$ squares to the central involution, centralizes $B$, and 
acts on $A$ by the nontrivial
involution in $\Gbar$ (if one exists) or trivially (otherwise).
For type~\ref{type-VI}, $\phi$ 
acts on $2A_5$ by an outer automorphism of order~$2$, which is unique
up to $\Aut2A_5$.
For type~\ref{type-IV},
$\phi$ inverts $\Theta$ and acts on $Q_8$ by an involution that
inverts the action of $\Theta$.  Such an automorphism is unique up to
an automorphism of $Q_8$ that respects the $\Theta$-action.

One can describe type~\ref{type-II} groups in terms of
type~\ref{type-I} in a similar way, but it is easier to  build them
directly.  Take $T$ to be a quaternion group of order~$t$.  First
suppose $t=8$.  Then we did not specify $\Gbar_0$.  Up to automorphism
of the domain there is a unique surjection from $T$ to the subgroup
$\Tbar$ of $\Gbar$ of order~$\tbar$.  We take $G=A:(B\times T)$.  On
the other hand, 
suppose $t>8$, in which case we did specify $\Gbar_0$.  We write $T_0$ for the
index~$2$ cyclic subgroup of $T$.  Up to automorphism of the domain,
there is a unique surjection from $T$ to $\Tbar$ which carries
$T_0$ onto the $2$-part of $\Gbar_0$ (which has order $1$ or~$2$).
And again $G=A:(B\times T)$.


\begin{thebibliography}{99}

\bibitem{ATLAS}
Conway, J. H., et. al., {\it Atlas of Finite Groups.}, Oxford University Press, Eynsham, 1985.

\bibitem{Gorenstein}
  Gorenstein, D.,
{\it Finite groups}. Harper \& Row, New York-London, 1968.

\bibitem{Isaacs} Isaacs, I. Martin, {\it Finite group theory.}  {\it
  Graduate Studies in Mathematics} {\bf 92}. American Mathematical
  Society, Providence, RI, 2008

\bibitem{Meierfrankenfeld}
  Meierfrankenfeld, U.,
  Perfect Frobenius complements,
  {\it Arch. Math. (Basel)} {\bf79} (2002) 19--26. 
  
\bibitem{Passman} Passman, D., {\it Permutation Groups},
  W. A. Benjamin, New York, 1968.

\bibitem{Wall} Wall, C. T. C., On the structure of finite groups with
  periodic cohomology, in {\it Lie groups: structure, actions, and
    representations}, 381--413, {\it Progr. Math.} {\bf 306},
  Birkhäuser/Springer, New York, 2013.
  
\bibitem{Wolf} Wolf, J., {\it Spaces of Constant Curvature},
  McGraw-Hill, New York, 1967. 
  
\bibitem{Zassenhaus-old} Zassenhaus, H., \"Uber endliche Fastk\"orper,
  {\it Abh. Math. Sem. Hamburg} {\bf 11} (1936) 187--220.

\bibitem{Zassenhaus-new}  Zassenhaus, H., On Frobenius groups. I, {\it
  Results Math.} {\bf 8} (1985) 132--145.

\end{thebibliography}
\end{document}